\theoremstyle{plain}
\newtheorem{theorem}{Theorem}[section]
\newtheorem{proposition}[theorem]{Proposition}
\newtheorem{lemma}[theorem]{Lemma}
\newtheorem{corollary}[theorem]{Corollary}
\theoremstyle{definition}
\newtheorem*{definition}{Definition}
\theoremstyle{remark}
\newtheorem{remark}[theorem]{Remark}
\newtheorem{example}[theorem]{Example}
\def\Ext{{\rm Ext}}
\def\ker{{\rm ker}}
\def\im{{\rm im}}
\def\cok{{\rm coker}}
\def\wmax{w\mbox{-}{\rm Max}}
\def\Hom{{\rm Hom}}
\def\GV{{\rm GV}}
\def\:{\mathop{:}\limits}
\def\tor{{\rm tor_{\rm GV}}}
\begin{document}
\title[A new version of a theorem of Kaplansky]
{A new version of a theorem of Kaplansky}

\author [F. Wang] {Fanggui Wang}
\address{(Fanggui Wang) School of Mathematical Sciences, Sichuan Normal University, Chengdu, Sichuan 610066, China}
\email{wangfg2004@163.com}

\author [L. Qiao] {Lei Qiao$^\sharp$}
\address{(Lei Qiao) School of Mathematical Sciences, Sichuan Normal University, Chengdu, Sichuan 610066, China}
\email{lqiao@sicnu.edu.cn}

\thanks{Key Words: $w$-split modules; $w$-projective modules; Kaplansky's theorem on projective modules}

\thanks{$2010$ Mathematics Subject Classification: 13C13; 13D07; 13D30}

\thanks{$\sharp$ Address correspondence to Dr. Lei Qiao, School of Mathematical Sciences, Sichuan Normal University,
Chengdu, Sichuan 610066, China; E-mail: lqiao@sicnu.edu.cn}

\date{\today}

\begin{abstract} A well-known theorem of Kaplansky states that any
projective module is a direct sum of countably generated modules. In
this paper, we prove the $w$-version of this theorem, where $w$ is a
hereditary torsion theory for modules over a commutative ring.
\end{abstract}

\maketitle

\section{Introduction}

A well-known theorem of Kaplansky states that any projective module
is a direct sum of countably generated modules (see \cite{K58}).
This is equivalent to saying that every projective module can be
filtered by countably generated and projective modules. In
\cite{ST09}, \v{S}\v{t}ov\'{\i}\v{c}ek and Trlifaj applied Hill's
method \cite{H81} to extend Kaplansky's theorem on projective
modules to the setting of cotorsion pairs. Later, Enochs et al.
\cite{EEI14} also got the analogous version of Kaplansky's Theorem
for cotorsion pairs for a more general setting on concrete
Grothendieck categories. Moreover, several versions of Kaplansky's
theorem have been discussed in the literature. For example, a
categorical version of Kaplansky's theorem on projective modules is
proved in \cite[Lemma 3.8]{O78} by Osofsky. Also, in \cite{EAO13},
Estrada et al. prove a version of Kaplansky's Theorem for
quasi-coherent sheaves, by using Drinfeld's notion of almost
projective module and the Hill Lemma.

The purpose of this article is to present a $w$-version of
Kaplansky's theorem on projective modules, where $w$ is a hereditary
torsion theory for modules over a commutative ring. Next, we shall
review some terminology related to the hereditary torsion theory
$w$, see \cite{WK16} for details. Throughout, $R$ denotes a
commutative ring with an identity element and all modules are
unitary.

Recall from \cite{YWZC11} that an ideal $J$ of $R$ is called a
\textit{Glaz-Vasconcelos ideal} (a \textit{$\GV$-ideal} for short)
if $J$ is finitely generated and the natural homomorphism
$$\varphi:R\rightarrow J^*:=\Hom_R(J,R)$$ is an isomorphism. Notice that
the set $\GV(R)$ of GV-ideals of $R$ is a multiplicative system of
ideals of $R$. Let $M$ be an $R$-module. Define $$\tor(M):=\{x\in
M~|~\mbox{$Jx=0$ for some $J\in\GV(R)$}\}.$$ Thus $\tor(M)$ is a
submodule of $M$. Now $M$ is said to be \textit{$\GV$-torsion}
(resp., \textit{$\GV$-torsionfree}) if $\tor(M)=M$ (resp.,
$\tor(M)=0$). A GV-torsionfree module $M$ is called a
\textit{$w$-module} if $\Ext^1_R(R/J,M)=0$ for all $J\in\GV(R)$.
Then projective modules and reflexive modules are both $w$-modules.
In \cite[Theorem 6.7.24]{WK16}, it is shown that all flat modules
are $w$-modules. Also it is known that a GV-torsionfree $R$-module
$M$ is a $w$-module if and only if $\Ext^1_R(N,M)=0$ for every
GV-torsion $R$-module $N$ (see \cite[Theorem 6.2.7]{WK16}). For any
GV-torsionfree module $M$,
$$M_w:=\{x\in E(M)~|~\mbox{$Jx\subseteq M$ for some $J\in\GV(R)$}\}$$
is a $w$-submodule of $E(M)$ containing $M$ and is called the
$w$-\textit{envelope} of $M$, where $E(M)$ denotes the injective
envelope of $M$. It is clear that a GV-torsionfree module $M$ is a
$w$-module if and only if $M_w=M$.

It is worthwhile to point out that from a torsion-theoretic point of
view, the notion of $w$-modules coincides with that of $\tor$-closed
(i.e., $\tor$-torsionfree and $\tor$-injective) modules, where
$\tor$ is the torsion theory whose torsion modules are the
GV-torsion modules and torsionfree modules are the GV-torsionfree
modules, that is, the pair of classes of $R$-modules
$$\tor=\mbox{(\{GV-torsion modules\},\{GV-torsionfree modules\})}$$
is a hereditary torsion theory on the category of $R$-modules. In
the integral domain case, $w$-modules were called {\it
semi-divisorial modules} in \cite{GV77} and (in the ideal case)
$F_\infty$-{\it ideals} in \cite{HH80}, which have been proved to be
useful in the study of multiplicative ideal theory and module
theory.

In \cite{WK15}, the first named author and Kim generalized
projective modules to the hereditary torsion theory $\tor$ setting and
introduced the notion of $w$-projective modules.
Recall that an $R$-module $M$ is said to be \textit{$w$-projective}
if $\Ext^1_R(L(M),N)$ is GV-torsion for any torsionfree
$w$-module $N$, where $L(M)=\left(M/\tor(M)\right)_w$. It is clear
that both $\GV$-torsion modules and projective modules are
$w$-projective. Actually, the notion of $w$-projective modules
appeared first in \cite{W97} when $R$ is an integral domain. Thus,
it is natural to ask if Kaplansky's theorem on projective modules
has a $w$-module theoretic analogue.

To give a $w$-version of Kaplansky's theorem, we first introduce and
study a class of modules closely related to the $w$-projective
modules called $w$-split modules (see Section 2). Then we prove, in
Section 3, the Kaplansky's theorem for $w$-projective $w$-modules in
terms of $w$-split modules. More precisely, it is shown that every
$w$-projective $w$-module can be filtered by countably generated and
$w$-split modules (see Theorem \ref{main result}).

Any undefined notions and notations are standard, as in
\cite{R79,GT06,WK16}.

\section{On $w$-split modules}

In this section, we introduce and study $w$-split modules, which can
be used to prove a $w$-version of Kaplansky's theorem on projective
modules.

Before we give the definition of $w$-split modules, we first fix the
following notation. If $M$ is an $R$-module and $s\in R$, then let
$\eta^M_s:M\rightarrow M$ denote the multiplication map $m\mapsto
sm$. It is obvious that $\eta^M_1=\textbf{1}_M$ is the identity map
on $M$. Moreover, if $M, N$ are $R$-modules, then for any $a\in R$
and any $f\in\Hom_R(M,N)$, we have the multiplication $a\cdot
f=f\eta^M_a=\eta^N_af$.

\begin{definition}\quad
\begin{enumerate}
    \item A short exact sequence of $R$-modules
$$0\rightarrow A\stackrel{f}{\rightarrow} B\stackrel{g}{\rightarrow} C\rightarrow 0$$
is said to be \textit{$w$-split} if there exist $J=\langle
d_1,\dots,d_n\rangle\in\GV(R)$ and $h_1,\dots,h_n\in\Hom_R(C,B)$
such that $\eta_{d_k}^C=gh_k$ for all $k=1,\dots,n$.
    \item An $R$-module $M$ is said to be \textit{$w$-split} if
there is a $w$-split short exact sequence of $R$-modules
$$0\rightarrow K\rightarrow P\rightarrow M\rightarrow 0$$ with $P$
projective.
\end{enumerate}
\end{definition}

The first part of the following lemma is exactly \cite[Exercise
1.60]{WK16} (without proof). However, for the purposes of showing
the second part of the lemma, we still give a proof for it.

\begin{lemma}\label{diagram} Consider the following commutative diagram with exact
rows:
$$\xymatrix{
  0 \ar[r] & A \ar[d]_{\alpha} \ar[r]^{f} & B \ar[d]_{\beta} \ar[r]^{g} \ar@{-->}[dl]_{h}& C
  \ar[d]^{\gamma} \ar[r]\ar@{-->}[dl]^{h^\prime} & 0\\
  0 \ar[r] & A^\prime \ar[r]_{f^\prime} & B^\prime \ar[r]_{g^\prime} & C^\prime \ar[r] & 0.}$$
Then there exists a homomorphism $h:B\rightarrow A^\prime$ with
$hf=\alpha$ if and only if there is a homomorphism
$h^\prime:C\rightarrow B^\prime$ with $g^\prime h^\prime=\gamma$. In
this case, the equality $\beta=f^{\prime}h+h^{\prime}g$ holds.
\end{lemma}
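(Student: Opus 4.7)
The plan is a straightforward diagram chase in two symmetric directions, leveraging the exactness of the rows at the critical moments. The equality $\beta=f'h+h'g$ is not an afterthought but the structural identity that naturally emerges from the construction, so I would organize the proof around producing it.

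For the forward direction, suppose $h\colon B\to A'$ satisfies $hf=\alpha$. Consider the map $\beta-f'h\colon B\to B'$. Precomposing with $f$ gives $\beta f-f'hf=\beta f-f'\alpha=0$ by commutativity of the left square, so $\beta-f'h$ vanishes on $\im f=\ker g$. Since $g$ is surjective, the universal property of the cokernel yields a unique $h'\colon C\to B'$ with $h'g=\beta-f'h$, which is precisely the desired identity $\beta=f'h+h'g$. To check $g'h'=\gamma$, compose with $g$: $g'h'g=g'\beta-g'f'h=\gamma g-0=\gamma g$ using commutativity of the right square and $g'f'=0$. Surjectivity of $g$ then forces $g'h'=\gamma$.

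For the converse, suppose $h'\colon C\to B'$ satisfies $g'h'=\gamma$. Now look at $\beta-h'g\colon B\to B'$. Postcomposing with $g'$ gives $g'\beta-g'h'g=\gamma g-\gamma g=0$, so the image of $\beta-h'g$ lies in $\ker g'=\im f'$. Because $f'$ is injective, I can define $h\colon B\to A'$ by $f'h=\beta-h'g$; again this is exactly the structural equality $\beta=f'h+h'g$. To verify $hf=\alpha$, compose with $f$: $f'hf=\beta f-h'gf=\beta f=f'\alpha$ (using $gf=0$ and commutativity of the left square). Injectivity of $f'$ then gives $hf=\alpha$.

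The only place one needs to pause is making sure the two directions use the exactness hypotheses symmetrically: exactness of the top row at $B$ (i.e., $\ker g=\im f$) together with surjectivity of $g$ drive the forward direction, while exactness of the bottom row at $B'$ (i.e., $\ker g'=\im f'$) together with injectivity of $f'$ drive the converse. There is no genuine obstacle here; the lemma is essentially the observation that a splitting of the quotient and a retraction of the sub are exchanged by the same correction term $h'g$ (equivalently $f'h$). I would emphasize that the identity $\beta=f'h+h'g$ is produced automatically in each construction, so the ``in this case'' clause requires no separate argument.
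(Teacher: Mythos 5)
Your proof is correct and follows essentially the same route as the paper's: both directions are the same diagram chase, with your factorization of $\beta-f'h$ through the cokernel of $f$ (resp.\ lifting of $\beta-h'g$ through $f'$) being a cleaner, element-free phrasing of the paper's explicit well-definedness checks. Your observation that the identity $\beta=f'h+h'g$ is built into each construction likewise matches the paper's remark that the second statement follows immediately from the proof.
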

\begin{proof}
Assume that there is a homomorphism $h:B\rightarrow A^\prime$ such
that $hf=\alpha$. Then for any $c\in C$, since $g$ is epic, $c=g(b)$
for some $b\in B$. Define $h^\prime:C\rightarrow B^\prime$ by
$$h^\prime(c)=\beta(b)-f^\prime h(b).$$ If $c=g(b)=g(b_1)$ for some
$b_1\in B$, then $b-b_1\in\ker(g)=\im(f)$. Therefore, $b-b_1=f(a)$
with $a\in A$, and so $$\beta(b-b_1)-f^\prime h(b-b_1)=\beta
f(a)-f^\prime hf(a)=\beta f(a)-f^\prime\alpha(a)=0,$$ i.e.,
$\beta(b)-f^\prime h(b)=\beta(b_1)-f^\prime h(b_1)$. Hence,
$h^\prime$ is a well-defined homomorphism with $g^\prime
h^\prime=\gamma$. Conversely, let $h^\prime:C\rightarrow B^\prime$
be a homomorphism with $g^\prime h^\prime=\gamma$. Then for each
$b\in B$, we have $$g^\prime\beta(b)-g^\prime h^\prime g(b)=\gamma
g(b)-\gamma g(b)=0,$$ and so $\beta(b)-h^\prime
g(b)\in\ker(g^\prime)=\im(f^\prime).$ Since $f^\prime$ is monic,
there is a unique $a^\prime\in A^\prime$ with
$f^\prime(a^\prime)=\beta(b)-h^\prime g(b)$. Now, define
$h:B\rightarrow A^\prime$ by $h(b)=a^\prime$. Then it is easy to
check that $h$ is a well-defined homomorphism. For any $a\in A$,
write $a^\prime=hf(a)$. Then $$f^\prime
hf(a)=f^\prime(a^\prime)=\beta f(a)-h^\prime
gf(a)=f^\prime\alpha(a).$$ Thus, it follows that $hf=\alpha$.

Moreover, the second statement follows immediately by the above proof.
\end{proof}

By using the lemma above, it is easy to prove the following
proposition.

\begin{proposition}\label{left w-split} An exact sequence of $R$-modules
$0\rightarrow A\stackrel{f}{\rightarrow} B\stackrel{g}{\rightarrow}
C\rightarrow 0$ is $w$-split if and only if there are $J=\langle
d_1,\dots,d_n\rangle\in\GV(R)$ and $q_1,\dots,q_n\in\Hom_R(B,A)$
such that $\eta_{d_k}^A=q_kf$ for all $k=1,\dots,n$. In this case, for each $k$,
the equality $\eta_{d_k}^B=fq_k+h_kg$ holds, where $h_k$ is as in the definition.
\end{proposition}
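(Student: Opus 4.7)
The plan is to reduce both directions of the equivalence to a single application of Lemma \ref{diagram}, applied to a diagram in which the given short exact sequence is placed on top of itself with multiplication-by-$d_k$ as the vertical maps. Concretely, for each $k$ I would consider
$$\xymatrix{
0 \ar[r] & A \ar[d]_{\eta_{d_k}^A} \ar[r]^{f} & B \ar[d]_{\eta_{d_k}^B} \ar[r]^{g} & C \ar[d]^{\eta_{d_k}^C} \ar[r] & 0 \\
0 \ar[r] & A \ar[r]_{f} & B \ar[r]_{g} & C \ar[r] & 0.
}$$
This commutes because scalar multiplication commutes with any $R$-homomorphism, so the hypotheses of Lemma \ref{diagram} are met with $\alpha = \eta_{d_k}^A$, $\beta = \eta_{d_k}^B$, $\gamma = \eta_{d_k}^C$.

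For the forward direction, assume the sequence is $w$-split with witnesses $J = \langle d_1, \ldots, d_n\rangle \in \GV(R)$ and $h_1, \ldots, h_n \in \Hom_R(C,B)$ satisfying $g h_k = \eta_{d_k}^C$. For each $k$, Lemma \ref{diagram} (the "only if" direction, read from right to left) supplies $q_k : B \to A$ with $q_k f = \eta_{d_k}^A$, using the same ideal $J$. The converse is symmetric: given $q_1, \ldots, q_n$ with $q_k f = \eta_{d_k}^A$, the same lemma (read in the other direction) produces $h_k : C \to B$ with $g h_k = \eta_{d_k}^C$, witnessing that the sequence is $w$-split.

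For the final statement, the "moreover" clause of Lemma \ref{diagram} gives $\beta = f' h + h' g$ in the notation of that lemma, which translates in our setup precisely to $\eta_{d_k}^B = f q_k + h_k g$. So this costs no extra work once the equivalence is in hand.

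I do not anticipate a genuine obstacle here: the proof is essentially a dictionary entry translating Lemma \ref{diagram} to the $w$-split context, and the only thing to verify is that Lemma \ref{diagram} is being invoked with matching data on both iterations ($J$ and its generators $d_1, \ldots, d_n$ stay fixed, and the same $d_k$ produces $q_k$ from $h_k$ and vice versa). The mildly delicate point is simply to state the equivalence index-by-index rather than as a single global splitting, so that the same ideal $J$ can serve both characterizations.
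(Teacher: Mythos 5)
Your proposal is correct and follows essentially the same route as the paper: the paper likewise places the sequence over itself with the vertical maps $\eta_{d_k}^A$, $\eta_{d_k}^B$, $\eta_{d_k}^C$ and invokes Lemma \ref{diagram} index-by-index (with the same fixed $J$) to convert each $h_k$ into a $q_k$ and conversely, obtaining $\eta_{d_k}^B=fq_k+h_kg$ from the lemma's final clause. No gaps.
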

\begin{proof}
Assume that the exact sequence $0\rightarrow
A\stackrel{f}{\rightarrow} B\stackrel{g}{\rightarrow} C\rightarrow
0$ is $w$-split. Then there exist $J=\langle
d_1,\dots,d_n\rangle\in\GV(R)$ and $h_1,\dots,h_n\in\Hom_R(C,B)$
such that $\eta_{d_k}^C=gh_k$ for all $k=1,\dots,n$. For each $k$,
let us consider the following commutative diagram with exact rows:
$$\xymatrix{
  0 \ar[r] & A \ar[d]_{\eta^A_{d_k}} \ar[r]^{f} & B \ar[d]^{\eta^B_{d_k}} \ar[r]^{g} \ar@{-->}[dl]_{q_k}& C
  \ar[d]^{\eta^C_{d_k}} \ar[r]\ar[dl]^{h_k} & 0\\
  0 \ar[r] & A \ar[r]_{f} & B\ar[r]_{g} & C \ar[r] & 0.}$$
Thus, by Lemma \ref{diagram}, there is a homomorphism
$q_k:B\rightarrow A$ with $\eta_{d_k}^A=q_kf$ and
$\eta_{d_k}^B=fq_k+h_kg$.

 In a similar way, we can see that the converse is also true.
\end{proof}

The following lemma gives a condition under which a GV-torsion
module is $w$-split. Also, it will be used to characterize $w$-split
modules.

\begin{lemma}\label{GV-torsion and w-split} Let $M$ be a
$\GV$-torsion $R$-module. Then $M$ is $w$-split if and only if there
exists a $J\in\GV(R)$ with $JM=0$.
\end{lemma}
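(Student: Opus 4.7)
The plan is to dispatch both implications using the fact, noted in the introduction, that every projective module is a $w$-module and hence $\GV$-torsionfree, combined with the trivial observation that any $R$-homomorphism from a $\GV$-torsion module into a $\GV$-torsionfree module must be zero.

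For the forward direction, suppose $M$ is $w$-split, so there is a short exact sequence
$$0\rightarrow K\rightarrow P\stackrel{g}{\rightarrow} M\rightarrow 0$$
with $P$ projective, together with $J=\langle d_1,\dots,d_n\rangle\in\GV(R)$ and homomorphisms $h_k\:M\rightarrow P$ satisfying $\eta^M_{d_k}=gh_k$ for each $k$. Since $P$ is projective it is a $w$-module, and in particular $\GV$-torsionfree; as $M$ is $\GV$-torsion, each $h_k$ is forced to be the zero map. Consequently $\eta^M_{d_k}=gh_k=0$ for every $k$, which means $d_kM=0$ for every generator of $J$, and therefore $JM=0$.

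For the converse, assume $JM=0$ for some $J=\langle d_1,\dots,d_n\rangle\in\GV(R)$. Choose any free module $P$ with an epimorphism $g\:P\rightarrow M$ and let $K=\ker(g)$, so that we have an exact sequence
$$0\rightarrow K\rightarrow P\stackrel{g}{\rightarrow} M\rightarrow 0$$
with $P$ projective. Take $h_k=0\in\Hom_R(M,P)$ for every $k$. Since $d_kM=0$, the multiplication $\eta^M_{d_k}$ is the zero map, and hence $\eta^M_{d_k}=0=gh_k$. The defining condition for $w$-splitting is satisfied, so $M$ is $w$-split.

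I do not anticipate any genuine obstacle: the main (and really only) point is the clean observation that every $h_k\:M\rightarrow P$ is zero in the forward direction, which requires nothing beyond the fact that projective modules are $w$-modules and that Hom from a torsion module into a torsionfree module vanishes. The backward direction is then an entirely formal verification with the choice $h_k=0$.
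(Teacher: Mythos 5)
Your proof is correct and follows essentially the same route as the paper's: in the forward direction both arguments observe that each $h_k$ must vanish because $P$ is $\GV$-torsionfree while $M$ is $\GV$-torsion, forcing $JM=0$; in the converse both take $h_k=0$ and verify the defining condition directly. No gaps.
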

\begin{proof} If $M$ is a $w$-split module, then there is a $w$-split short exact
sequence $0\rightarrow A\rightarrow P\stackrel{g}{\rightarrow}
M\rightarrow 0$ with $P$ a projective module. Then we can pick
$J=\langle d_1,\dots,d_n\rangle\in\GV(R)$ and $h_k:M\rightarrow P$
as in definition. Since $P$ is $\GV$-torsionfree, we must have
$h_k=0$ for all $k=1,\dots,n$. Hence, for each $x\in M$,
$d_kx=\eta^M_{d_k}(x)=gh_k(x)=0$ for all $k$, and so $JM=0$.

Conversely, suppose that there is a $J\in\GV(R)$ with $JM=0$.
Consider a short exact sequence $0\rightarrow A\rightarrow
P\stackrel{g}{\rightarrow} M\rightarrow 0$ of $R$-modules with $P$
projective. Set $J=\langle d_1,\dots,d_n\rangle$ and let
$h_k:M\rightarrow P$ be the zero maps for all $k=1,\dots,n$. Then
for any $x\in M$, we obtain $d_kx=0=gh_k(x)$, and so
$\eta_{d_k}^M=gh_k$ for each $k$. Therefore, $M$ is a $w$-split
module.
\end{proof}

\begin{proposition}\label{characterizations of w-split modules}
The following statements are equivalent for an
$R$-module $M$.
\begin{enumerate}
    \item $M$ is a $w$-split module.
    \item $\Ext^1_R(M,N)$ is $\GV$-torsion for all $R$-modules $N$.
    \item $\Ext^i_R(M,N)$ is $\GV$-torsion for all $R$-modules $N$ and for all integers $i\geq 1$.
    \item For any $R$-epimorphism $g:B\rightarrow C$, the induced
map $$g_*:\Hom_R(M,B)\rightarrow \Hom_R(M,C)$$ has a $\GV$-torsion
cokernel.
    \item For any $R$-epimorphism $g^\prime:F\rightarrow M$ with $F$ projective, the induced
map $$g_*^\prime:\Hom_R(M,F)\rightarrow \Hom_R(M,M)$$ has a
$\GV$-torsion cokernel.
    \item For any $R$-epimorphism $g:B\rightarrow C$ and for each homomorphism $\alpha:M\rightarrow C$,
there exist $J=\langle d_1,\dots,d_n\rangle\in\GV(R)$ and
homomorphisms $h_k:M\rightarrow B$ such that $gh_k=d_k\cdot\alpha$ where
$k=1,\dots,n$.
    \item Every exact sequence $0\rightarrow
A\stackrel{f}\rightarrow B\stackrel{g}\rightarrow M\rightarrow 0$ of
$R$-modules is $w$-split.
    \item There exist elements $\{x_i\}_{i\in I}$ of $M$ and $J=\langle
d_1,\dots,d_n\rangle\in\GV(R)$ such that for all $k=1,\dots,n$,
there are homomorphisms $\{f_{k_i}\in M^*\}_{i\in I}$ satisfying
that for each $x\in M$, almost all $f_{k_i}(x)=0$ and
$d_kx=\sum\limits_i f_{k_i}(x)x_i$.
\end{enumerate}
\end{proposition}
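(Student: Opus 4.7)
My plan is to establish the eight equivalences by running the cycle $(1)\Rightarrow(6)\Rightarrow(4)\Rightarrow(5)\Rightarrow(1)$, together with the quick back-and-forth $(1)\Leftrightarrow(7)$, the Ext-based chain $(1)\Leftrightarrow(2)\Leftrightarrow(3)$, and the dual-basis-style equivalence $(1)\Leftrightarrow(8)$. Throughout, I fix a presentation $0\to K\to P\stackrel{\pi}{\rightarrow} M\to 0$ with $P$ projective; whenever $M$ is $w$-split I also fix $J=\langle d_1,\dots,d_n\rangle\in\GV(R)$ and $h_k\colon M\to P$ with $\pi h_k=\eta^M_{d_k}$, as in the definition.

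For $(1)\Rightarrow(6)$: given an epimorphism $g\colon B\to C$ and $\alpha\colon M\to C$, projectivity of $P$ produces $\tilde\alpha\colon P\to B$ with $g\tilde\alpha=\alpha\pi$, and $h'_k:=\tilde\alpha h_k$ satisfies $gh'_k=d_k\cdot\alpha$. Reading (6) in terms of the induced cokernel gives $(6)\Rightarrow(4)$, and $(4)\Rightarrow(5)$ is a special case. For $(5)\Rightarrow(1)$, I apply the GV-torsion property of $\cok g'_*$ to the class of $1_M$ to extract $J$ and $h_k\colon M\to F$ with $g'h_k=\eta^M_{d_k}$, which is precisely $w$-splitness. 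The equivalence $(1)\Leftrightarrow(7)$ is then immediate: any presentation $0\to A\to B\stackrel{g}{\rightarrow} M\to 0$ admits a lift $\psi\colon P\to B$ of $\pi$ through $g$, and $\psi h_k$ witnesses $w$-splitness of the new sequence; conversely, specializing (7) to a projective presentation recovers (1).

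For $(1)\Leftrightarrow(2)$ I would use the standard identification of multiplication by $d$ on $\Ext^1(M,N)$ with pullback of extensions along $\eta^M_d$: the class of the chosen presentation $0\to K\to P\to M\to 0$ is killed by $d_k$ in $\Ext^1(M,K)$ exactly when the pullback along $\eta^M_{d_k}$ splits, equivalently when there exists $h_k\colon M\to P$ with $\pi h_k=\eta^M_{d_k}$. For $(1)\Rightarrow(3)$, Proposition \ref{left w-split} supplies $q_k\colon P\to K$ with $\eta^K_{d_k}=q_k f$, so multiplication by $d_k$ on $\Ext^i(K,N)$ factors through $\Ext^i(P,N)=0$ for $i\ge 1$; combined with the shifting isomorphism $\Ext^{i+1}(M,N)\cong \Ext^i(K,N)$ this extends GV-torsionness to all degrees, while $(3)\Rightarrow(2)$ is trivial.

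Finally, for $(1)\Leftrightarrow(8)$: from (1), choose a dual basis $\{y_i\}\subseteq P$, $\{g_i\in P^*\}$ for the projective $P$ and put $x_i=\pi(y_i)$ and $f_{k_i}=g_i h_k$; applying $\pi$ to the expansion $h_k(x)=\sum_i g_i(h_k(x))\,y_i$ yields $d_kx=\sum_i f_{k_i}(x)x_i$. Conversely I would derive $(8)\Rightarrow(6)$ directly: for $g\colon B\to C$ epi and $\alpha\colon M\to C$, lift each $\alpha(x_i)$ to some $b_i\in B$ and set $h_k(x)=\sum_i f_{k_i}(x)\,b_i$; $R$-linearity of the $f_{k_i}$ and the identity in (8) yield $gh_k=d_k\cdot\alpha$. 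The main delicate point I anticipate is the identification in $(1)\Leftrightarrow(2)$ of $d$-multiplication on $\Ext^1$ with extension pullback and the concrete translation of its vanishing into the required section $h_k$; everything else reduces to diagram chases aided by Lemma \ref{diagram} and Proposition \ref{left w-split}.
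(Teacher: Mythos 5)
Your proof is correct, but it takes a genuinely different route from the paper's in several places, so let me compare. The paper runs the single chain $(1)\Rightarrow(2)\Rightarrow(3)\Rightarrow(4)\Rightarrow(5)\Rightarrow(1)$ together with $(4)\Rightarrow(6)\Rightarrow(7)\Rightarrow(8)\Rightarrow(1)$, so all the lifting statements $(4)$--$(7)$ are deduced \emph{through} $\Ext^1$; you instead get $(6)$, $(4)$, $(5)$ and $(7)$ directly from $(1)$ by bare projectivity of $P$ (lift $\alpha\pi$ through $g$ and compose with the $h_k$), which keeps homological algebra out of that part of the cycle and is if anything more elementary. Your $(8)\Rightarrow(1)$ is also genuinely different and cleaner: the paper passes through the submodule $M'$ generated by the $x_i$, invokes Lemma \ref{GV-torsion and w-split} for the GV-torsion quotient $M/M'$, and then needs the already-established equivalence of $(1)$ and $(2)$ to reassemble $M$, whereas your direct dual-basis construction $h_k(x)=\sum_i f_{k_i}(x)b_i$ with $g(b_i)=\alpha(x_i)$ lands in $(6)$ in one step. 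For $(2)$ the paper argues elementarily on $\cok(f^*)$ using the retraction-type maps $q_k$ of Proposition \ref{left w-split} (and your $(1)\Rightarrow(3)$, factoring $d_k$-multiplication on $\Ext^i(K,N)$ through $\Ext^i(P,N)=0$ via those same $q_k$, is a nice concrete substitute for the paper's ``standard homological algebra''). The one place you should add a line is $(1)\Rightarrow(2)$: the Baer-sum identification as you state it only shows that $J$ kills the single class $[\xi]\in\Ext^1_R(M,K)$ of the chosen presentation; to get that $\Ext^1_R(M,N)$ is GV-torsion for \emph{every} $N$ you must also observe that the connecting map $\partial\colon\Hom_R(K,N)\to\Ext^1_R(M,N)$ is surjective (since $\Ext^1_R(P,N)=0$) and is given by pushout, so $d_k\partial(\alpha)=\alpha_*(d_k[\xi])=0$ and hence $J\cdot\Ext^1_R(M,N)=0$. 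This is standard and you flagged the step as delicate, but as written the conclusion of $(2)$ does not yet follow; alternatively you could route $(2)$ through your own $(4)$ by embedding $N$ in an injective module.
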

\begin{proof}
$(1)\Rightarrow (2)$ Suppose that $M$ is a $w$-split module and let
$N$ be an $R$-module. Then there exists a $w$-split exact sequence
$0\rightarrow L\stackrel{f}{\rightarrow} F\stackrel{g}{\rightarrow}
M\rightarrow 0$ of $R$-modules with $F$ projective. Since
$$0\rightarrow \Hom_R(M,N)\rightarrow
\Hom_R(F,N)\stackrel{f^*}\longrightarrow \Hom_R(L,N)\rightarrow
\Ext^1_R(M,N)\rightarrow 0$$ is exact, it suffices to prove that
$\cok(f^*)$ is a $\GV$-torsion module. But this is equivalent to
showing that for any $\alpha\in\Hom_R(L,N)$, there is a $J\in\GV(R)$
with $J\alpha\subseteq\im(f^*)$. Now let $\alpha\in\Hom_R(L,N)$ be
arbitrary. Pick $J$ and $q_k:F\rightarrow L$ as in Proposition
\ref{left w-split}, and set $\beta_k=\alpha q_k$, where
$k=1,\dots,n$. Then $f^*(\beta_k)=\beta_k
f=\alpha\eta^L_{d_k}=d_k\cdot \alpha$ for all $k$. Thus, it follows
that $J\alpha\subseteq\im(f^*)$, as desired.

$(2)\Rightarrow (3)$ It follows from standard homological algebra.

$(3)\Rightarrow (4)$ Suppose that (3) holds and let $g:B\rightarrow
C$ be an epimorphism of $R$-modules. Then the sequence
$$\Hom_R(M,B)\stackrel{g_*}\longrightarrow \Hom_R(M,C)\rightarrow
\Ext^1_R(M,\ker(g))$$ is exact. Thus, by (3), $\cok(g_*)$ is
$\GV$-torsion.

$(4)\Rightarrow (5)$ is trivial.

$(5)\Rightarrow (1)$ This follows easily from the definition of
$w$-split modules.

$(4)\Rightarrow (6)$ Assume that (4) holds. Let $g:B\rightarrow C$
be an epimorphism of $R$-modules and $\alpha:M\rightarrow C$ a
homomorphism. Then $\cok(g_*)$ is a $\GV$-torsion module, and so
there exists a $J=\langle d_1,\dots,d_n\rangle\in\GV(R)$ with
$J\alpha\subseteq \im(g_*)$. Thus, we can find
$h_1,\dots,h_n\in\Hom_R(M,B)$ such that for each $k=1,\dots,n$,
$d_k\cdot\alpha=gh_k$, and so (6) holds.

$(6)\Rightarrow (7)$ Apply (6) to the identity map
$\textbf{1}_M:M\rightarrow M$.

$(7)\Rightarrow (8)$ Let $0\rightarrow A\rightarrow
F\stackrel{g}{\rightarrow} M\rightarrow 0$ be an exact sequence of
$R$-modules with $F$ free. Then by (7), it is $w$-split, and so
there exist $J=\langle d_1,\dots,d_n\rangle\in\GV(R)$ and
$h_1,\dots,h_n\in\Hom_R(M,F)$ such that $\eta_{d_k}^M=gh_k$ for all
$k=1,\dots,n$. Now, let $\{e_i\}_{i\in I}$ be a basis of $F$ and set
$x_i=g(e_i)$ for each $i\in I$. Then for any $x\in M$,
$h_k(x)=\sum\limits_i r_{k_i}e_i$, where $r_{k_i}\in R$ and only
finitely many $r_{k_i}\neq 0$. Define $f_{k_i}:M\rightarrow R$ for
all $k=1,\dots,n$ and for all $i\in I$, by $f_{k_i}(x)=r_{k_i}$. It
is clear that all $f_{k_i}\in M^*$ and, for any $x\in M$, we have
$d_k x=\sum\limits_i f_{k_i}(x)x_i$. Hence, (8) holds.

$(8)\Rightarrow (1)$ Assume that (8) holds. Define $F$ to be the
free $R$-module with basis $\{e_i\}_{i\in I}$, and define an $R$-map
$g:F\rightarrow M^\prime$ by $g:e_i\mapsto x_i$, where $M^\prime$ is
the $R$-submodule of $M$ generated by the set $\{x_i\}_{i\in I}$.
Then we obtain an exact sequence $0\rightarrow A\rightarrow
F\stackrel{g}{\rightarrow} M^\prime\rightarrow 0$ with $A=\ker(g)$.
For each $k=1,\dots,n$, define $h_k:M^\prime\rightarrow F$ by
$h_k(y)=\sum\limits_i f_{k_i}(y)e_i$, where $y\in M^\prime$. Since
the sum is finite, $h_k$ is a well-defined homomorphism. Moreover,
for any $y\in M^\prime$, $$gh_k(y)=g\left(\sum\limits_i
f_{k_i}(y)e_i\right)=\sum\limits_i
f_{k_i}(y)x_i=d_ky=\eta^{M^\prime}_{d_k}(y),$$ i.e.,
$\eta_{d_k}^{M^\prime}=gh_k$. Therefore, $M^\prime$ is a $w$-split
module.

Finally, notice that $J(M/M^\prime)=0$ and $M/M^\prime$ is a
GV-torsion $R$-module. Then $M/M^\prime$ is $w$-split by Lemma
\ref{GV-torsion and w-split}. Thus, the equivalence of (1) and (2)
implies that $M$ is also $w$-split.
\end{proof}

As a corollary of Proposition \ref{characterizations of w-split
modules}, we have:

\begin{corollary}\label{corollary of w-split} The following statements hold.
\begin{enumerate}
    \item Every $w$-split module is $w$-projective.
    \item Let $0\rightarrow A\rightarrow B\rightarrow C\rightarrow
    0$ be an exact sequence of $R$-modules with $C$ $w$-split. Then
    $A$ is $w$-split if and only if so is $B$.
\end{enumerate}
\end{corollary}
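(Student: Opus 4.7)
My plan is to derive both parts from the $\Ext$-characterization of $w$-split modules in Proposition~\ref{characterizations of w-split modules}(2)--(3), combined with the fact that the class of GV-torsion modules is closed under submodules, quotients, and extensions (the last being the key nontrivial closure property, standard for the hereditary torsion theory $\tor$).

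For part (1), I first observe that $M$ being $w$-split yields $\Ext^1_R(M,N)$ GV-torsion for \emph{every} $R$-module $N$, whereas $w$-projectivity requires only that $\Ext^1_R(L(M),N)$ be GV-torsion for $N$ a GV-torsionfree $w$-module, where $L(M)=(M/\tor(M))_w$. I would bridge these via the pair of short exact sequences
$$0\rightarrow\tor(M)\rightarrow M\rightarrow M/\tor(M)\rightarrow 0,\qquad 0\rightarrow M/\tor(M)\rightarrow L(M)\rightarrow T\rightarrow 0,$$
where $T:=L(M)/(M/\tor(M))$ is GV-torsion by construction of the $w$-envelope. For a GV-torsionfree $w$-module $N$, the GV-torsionfreeness of $N$ kills $\Hom_R(\tor(M),N)$, while the $w$-module criterion recalled in the introduction kills both $\Ext^1_R(\tor(M),N)$ and $\Ext^1_R(T,N)$. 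Chasing the resulting long exact sequences then produces an injection $\Ext^1_R(L(M),N)\hookrightarrow\Ext^1_R(M/\tor(M),N)\cong\Ext^1_R(M,N)$, and GV-torsion passes to the submodule, giving $w$-projectivity.

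For part (2), I would apply Proposition~\ref{characterizations of w-split modules}(3) to the $w$-split module $C$, so that $\Ext^i_R(C,N)$ is GV-torsion for every $i\ge 1$ and every $N$. The long exact Ext sequence of $0\rightarrow A\rightarrow B\rightarrow C\rightarrow 0$ contains
$$\Ext^1_R(C,N)\rightarrow\Ext^1_R(B,N)\rightarrow\Ext^1_R(A,N)\rightarrow\Ext^2_R(C,N),$$
whose outer terms are GV-torsion. If $A$ is $w$-split then $\Ext^1_R(A,N)$ is GV-torsion too, so $\Ext^1_R(B,N)$ is sandwiched between GV-torsion modules and is itself GV-torsion by extension-closure; conversely, if $B$ is $w$-split then the same sandwich argument applied to the segment above forces $\Ext^1_R(A,N)$ to be GV-torsion. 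Proposition~\ref{characterizations of w-split modules}(2) then closes each direction. I do not foresee a genuine obstacle: once the characterization in Proposition~\ref{characterizations of w-split modules} is in hand, both items reduce to routine diagram chasing within the hereditary torsion theory $\tor$, with the only subtlety being the extension-closure of the GV-torsion class (itself a standard consequence of $\GV(R)$ being a multiplicative system of finitely generated ideals).
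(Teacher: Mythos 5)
Your argument is correct and follows the route the paper clearly intends: the corollary is stated as an immediate consequence of Proposition~\ref{characterizations of w-split modules}, and your use of conditions (2)--(3) there, together with the closure of the GV-torsion class under submodules, quotients, and extensions, is exactly the deduction being left to the reader. Your extra care in part (1) --- bridging $\Ext^1_R(M,N)$ and $\Ext^1_R(L(M),N)$ via the sequences $0\rightarrow\tor(M)\rightarrow M\rightarrow M/\tor(M)\rightarrow 0$ and $0\rightarrow M/\tor(M)\rightarrow L(M)\rightarrow T\rightarrow 0$, using that $\Hom_R(\tor(M),N)=0$ and $\Ext^1_R(\tor(M),N)=\Ext^1_R(T,N)=0$ for a $w$-module $N$ --- is a genuine step the paper suppresses, and you handle it correctly.
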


Next, we will give an example of a $w$-projective module, which is
not $w$-split.

\begin{example} Let $R$ be a two dimensional regular local ring with the
maximal ideal $\frak m$ and set
$$\mbox{$M=\bigoplus\{R/J~|~J\in\GV(R)\}$}.$$ Then $M$ is a $\GV$-torsion
module, and so it is $w$-projective. Now, we say that $M$ is not
$w$-split. If not, then there is a $J_0\in \GV(R)$ with $J_0M=0$ by
Lemma \ref{GV-torsion and w-split}. This means that $J_0$ is
contained in all $J\in\GV(R)$. We claim next that $\frak
m\in\GV(R)$. Indeed, since $R$ is a two dimensional regular local
ring, $\frak m$ is generated by an $R$-sequence of length two, and
so $$\frak m^*=\Hom_R(\frak m, R)\cong \frak m^{-1}=R$$ by \cite[p.
102, Exercise 1]{K74}, where $\frak m^{-1}=\{x\in Q~|~\frak
mx\subseteq R\}$ and $Q$ is the quotient field of $R$. Therefore,
$\frak m\in\GV(R)$, and so is $\frak m^n$ for any integer $n\geq 1$.
Thus, it follows that
$$\mbox{$J_0\subseteq \bigcap\limits_{n=1}^{\infty}{\frak m}^n$=0,}$$
whence $J_0=0$. However, this means that $R$ as a module over itself
is both GV-torsion and GV-torsionfree, hence $R=0$, which is a
contradiction.
\end{example}

We close this section with a short discussion of when a
$w$-projective module is $w$-split.

\begin{proposition}\label{w-projective w-module is w-split} Every
$w$-projective $w$-module is $w$-split.
\end{proposition}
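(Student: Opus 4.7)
My plan is to invoke Proposition \ref{characterizations of w-split modules}, specifically the implication $(5)\Rightarrow(1)$. So I would start by choosing any epimorphism $g':F\rightarrow M$ with $F$ projective (say, with $F$ free on a generating set of $M$), setting $K=\ker(g')$, and aiming to show that the cokernel of $g'_*:\Hom_R(M,F)\rightarrow \Hom_R(M,M)$ is $\GV$-torsion. The long exact sequence obtained by applying $\Hom_R(M,-)$ to $0\rightarrow K\rightarrow F\rightarrow M\rightarrow 0$ embeds $\cok(g'_*)$ into $\Ext^1_R(M,K)$, so it suffices to prove that $\Ext^1_R(M,K)$ is $\GV$-torsion. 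Since $M$ is assumed to be a $w$-module, we have $\tor(M)=0$ and $L(M)=M_w=M$, so the definition of $w$-projectivity reduces to: $\Ext^1_R(M,N)$ is $\GV$-torsion for every torsionfree $w$-module $N$. The whole argument therefore collapses to the single verification that $K$ is a torsionfree $w$-module.

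The key technical step, which I anticipate to be the only substantive point, is thus this verification. That $K$ is $\GV$-torsionfree is immediate from $K\subseteq F$ together with the fact that projective (hence $w$-) modules are $\GV$-torsionfree. To see that $K$ is a $w$-module, I would apply $\Hom_R(R/J,-)$, for an arbitrary $J\in\GV(R)$, to $0\rightarrow K\rightarrow F\rightarrow M\rightarrow 0$ and read off the exact segment
$$\Hom_R(R/J,M)\rightarrow \Ext^1_R(R/J,K)\rightarrow \Ext^1_R(R/J,F).$$
The right-hand term vanishes because $F$ is a $w$-module, and the left-hand term vanishes because $M$ is $\GV$-torsionfree (any map from $R/J$ into a $\GV$-torsionfree module is zero). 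Hence $\Ext^1_R(R/J,K)=0$ for every $J\in\GV(R)$, so $K$ is a $w$-module.

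With $K$ identified as a torsionfree $w$-module, $w$-projectivity of $M$ gives $\Ext^1_R(M,K)$ is $\GV$-torsion, so $\cok(g'_*)$ is $\GV$-torsion (as a submodule of a $\GV$-torsion module), which is exactly condition (5) of Proposition \ref{characterizations of w-split modules}. The conclusion that $M$ is $w$-split then follows by $(5)\Rightarrow(1)$ of that proposition. I do not foresee a real obstacle here; the only place where one might stumble is in confirming that the relevant $\Hom$ and $\Ext$ groups behave as claimed, but both vanishings are direct from the standing hypotheses that $F$ is projective and $M$ is a $w$-module.
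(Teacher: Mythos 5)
Your proposal is correct and follows essentially the same route as the paper's proof: take a projective presentation $0\rightarrow K\rightarrow F\rightarrow M\rightarrow 0$, observe that $\cok(g'_*)$ embeds in $\Ext^1_R(M,K)$, note that $K$ is a torsionfree $w$-module so that $w$-projectivity of $M$ (with $L(M)=M$) makes this $\Ext$ group $\GV$-torsion, and conclude via condition (5) of Proposition \ref{characterizations of w-split modules}. The only difference is that you spell out the verification that $K$ is a $w$-module, which the paper asserts without detail; your argument for that step is sound.
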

\begin{proof}  Let $M$ be a $w$-projective $w$-module over $R$ and
let $g:P\rightarrow M$ be an $R$-epimorphism with $P$ projective. It
follows that $0\rightarrow K\rightarrow P\stackrel{g}{\rightarrow}
M\rightarrow 0$ is exact, where $K=\ker(g)$. Then we have the
following exact sequence
$$\Hom_R(M,P)\stackrel{g_*}{\longrightarrow} \Hom_R(M,M)\rightarrow
\Ext^1_R(M,K)$$ Since $M$ is $\GV$-torsionfree, $K$ is a torsionfree
$w$-module, and so $\Ext^1_R(M,K)$ is $\GV$-torsion by the
$w$-projectivity of $M$. Thus, $\cok(g_*)$ is also $\GV$-torsion,
whence $M$ is a $w$-split module by Proposition
\ref{characterizations of w-split modules}.
\end{proof}

\begin{proposition}\label{when a w-projective module is w-split} Let
$M$ be a $\GV$-torsionfree $w$-projective $R$-module. Then $M$ is
$w$-split if and only if there is a $J\in\GV(R)$ with $JM_w\subseteq
M$.
\end{proposition}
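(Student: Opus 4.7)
The plan is to handle the two implications separately. The forward direction exploits that any projective module is a $w$-module, while the converse exploits the $w$-projectivity hypothesis against the kernel of a projective cover.

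For the forward direction, assume $M$ is $w$-split and fix $J=\langle d_1,\dots,d_n\rangle\in\GV(R)$, a projective $P$, an epimorphism $g:P\to M$, and maps $h_k:M\to P$ with $gh_k=\eta^M_{d_k}$. Since $P$ is projective it is flat, hence a $w$-module, while $M_w/M$ is $\GV$-torsion; the standard characterization of $w$-modules then gives $\Ext^1_R(M_w/M,P)=0$, so each $h_k$ extends along $M\hookrightarrow M_w$ to some $\tilde h_k:M_w\to P$. Writing $\iota:M\hookrightarrow M_w$ and setting $\varphi_k:=\iota g\tilde h_k-\eta^{M_w}_{d_k}:M_w\to M_w$, I have $\varphi_k\iota=0$ by construction, so $\varphi_k$ factors through $M_w/M$; but $\Hom_R(M_w/M,M_w)=0$ because $M_w/M$ is $\GV$-torsion and $M_w$ is $\GV$-torsionfree, forcing $\varphi_k=0$. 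Therefore $d_kx=g\tilde h_k(x)\in M$ for every $x\in M_w$, which gives $JM_w\subseteq M$.

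For the converse, suppose $JM_w\subseteq M$ for some $J=\langle d_1,\dots,d_n\rangle\in\GV(R)$, pick an epimorphism $g:P\to M$ with $P$ projective, and let $K:=\ker(g)$. I first check that $K$ is a $w$-module: it is $\GV$-torsionfree as a submodule of $P$, and for every $I\in\GV(R)$ the long exact sequence sandwiches $\Ext^1_R(R/I,K)$ between $\Hom_R(R/I,M)=0$ (since $M$ is $\GV$-torsionfree) and $\Ext^1_R(R/I,P)=0$ (since $P$ is a $w$-module). Because $M$ is $\GV$-torsionfree, $L(M)=M_w$, so the $w$-projectivity of $M$ forces $\Ext^1_R(M_w,K)$ to be $\GV$-torsion. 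The hypothesis promotes multiplication by $d_k$ to an honest homomorphism $\mu_k:M_w\to M$; applying $\Hom_R(M_w,-)$ to $0\to K\to P\to M\to 0$ and inspecting the connecting homomorphism $\delta$, each $\delta(\mu_k)$ is annihilated by some $J'_k\in\GV(R)$. Since $\GV(R)$ is multiplicatively closed, I may replace these by the single ideal $J':=J'_1\cdots J'_n=\langle e_1,\dots,e_m\rangle\in\GV(R)$, which kills every $\delta(\mu_k)$ at once. Hence each $e_j\mu_k$ lifts to some $\tilde h_{jk}:M_w\to P$ with $g\tilde h_{jk}(x)=e_jd_kx$ for $x\in M_w$; restricting to $M$ yields the maps witnessing that $0\to K\to P\to M\to 0$ is $w$-split with respect to $JJ'\in\GV(R)$.

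I expect the main obstacle to be the forward direction, specifically the passage from the identity $gh_k=\eta^M_{d_k}$ on $M$ to the corresponding identity on all of $M_w$. The two-step device above, first extending $h_k$ using that $P$ kills $\Ext^1$ against $\GV$-torsion modules, then eliminating the residual difference via the $\GV$-torsion/$\GV$-torsionfree argument, seems to be the only clean route; everything else reduces to routine homological bookkeeping once $K$ is known to be a $w$-module.
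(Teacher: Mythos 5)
Your proof is correct, but it takes a genuinely different route from the paper's. The paper reduces both implications to the single claim that $T=M_w/M$ is $w$-split: it first notes that $M_w$ is a $w$-projective $w$-module and hence $w$-split by Proposition \ref{w-projective w-module is w-split}, then gets the ``if'' direction from Lemma \ref{GV-torsion and w-split} together with Corollary \ref{corollary of w-split}(2), and for the ``only if'' direction shows that $\cok\bigl(\Hom_R(M_w,N)\to\Hom_R(M,N)\bigr)$ is $\GV$-torsion for every $N$ by extending the splitting maps $h_k$ over $M_w$ and pushing them through a comparison diagram of projective presentations. You instead work entirely with the defining sequence $0\to K\to P\to M\to 0$. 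Your forward direction is arguably cleaner: after extending the $h_k$ to $\tilde h_k:M_w\to P$ (the same extension step the paper uses, valid since $P$ is a $w$-module and $M_w/M$ is $\GV$-torsion), you observe that $\iota g\tilde h_k-\eta^{M_w}_{d_k}$ kills $M$, hence factors through the $\GV$-torsion module $M_w/M$ into the $\GV$-torsionfree module $M_w$ and so vanishes, giving $d_kM_w\subseteq M$ outright; notably this shows the forward implication needs no $w$-projectivity hypothesis at all (consistent with Corollary \ref{corollary of w-split}(1), which makes that hypothesis automatic). Your converse is a direct lifting argument: you check that $K$ is a torsionfree $w$-module, use $w$-projectivity of $M$ to make $\Ext^1_R(M_w,K)$ $\GV$-torsion, and lift the multiplication maps $\mu_k:M_w\to M$ at the cost of enlarging $J$ to $JJ'$ --- harmless, since $\GV(R)$ is multiplicatively closed. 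The paper's route buys reuse of its earlier structural lemmas and keeps the quotient $T$ in focus; yours buys a self-contained, more elementary argument that produces the splitting homomorphisms explicitly.
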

\begin{proof} Write $T=M_w/M$. Then $T$ is a $\GV$-torsion module and
the sequence $0\rightarrow M\stackrel{\mu}\rightarrow
M_w\stackrel{\pi}\rightarrow T\rightarrow 0$ is exact, where $\mu$
is the inclusion map and $\pi$ is the natural map. Also, note that
$L(M)=M_w=L(M_w)$. Thus it follows, from the definition of
$w$-projective modules and the $w$-projectivity of $M$, that $M_w$
is also $w$-projective. Hence, it follows from Proposition
\ref{w-projective w-module is w-split} that $M_w$ is $w$-split.

If there is a $J\in\GV(R)$ with $JM_w\subseteq M$, then $JT=0$, and
so $T$ is $w$-split by Lemma \ref{GV-torsion and w-split}.
Therefore, Corollary \ref{corollary of w-split}(2) says that $M$ is
$w$-split.

To prove the converse, it suffices by Lemma \ref{GV-torsion and
w-split} to show that $T$ is a $w$-split module. This in turn is
equivalent to prove that for each $R$-module $N$, $\Ext^1_R(T,N)$ is
$\GV$-torsion. Let $N$ be an arbitrary $R$-module. Thus, consider
the following exact sequence
$$\Hom_R(M_w,N)\stackrel{\mu^*}\longrightarrow
\Hom_R(M,N)\rightarrow \Ext^1_R(T,N)\rightarrow \Ext^1_R(M_w,N).$$
Then since $M_w$ is $w$-split, $\Ext^1_R(M_w,N)$ is $\GV$-torsion.
Hence, to complete the proof, we need only to show that
$\cok(\mu^*)$ is $\GV$-torsion as well. As $M$ is a $w$-split
module, there is a $w$-split short exact sequence $0\rightarrow
K\rightarrow P\stackrel{g}{\rightarrow} M\rightarrow 0$ with $P$
projective, hence there exists $J=\langle
d_1,\dots,d_n\rangle\in\GV(R)$ and homomorphisms
$h_1,\dots,h_n:M\rightarrow P$ such that $\eta_{d_k}^M=gh_k$, for
all $k=1,\dots,n$. Now, let $\alpha\in\Hom_R(M,N)$ and consider the
following commutative diagram with exact rows
$$\xymatrix{0 \ar[r] & K \ar[d]\ar[r] & P\ar[d]^{\beta}\ar[r]^{g} &
M \ar[d]^{\alpha}\ar[r] & 0\\ 0 \ar[r] & A\ar[r] &
Q\ar[r]_{g^\prime} & N\ar[r] & 0,}$$ where $P$ and $Q$ are
projective modules. Since $P$ is a $w$-module, each $h_k$ can be
extended to a homomorphism $h^\prime_k:M_w\rightarrow P$. For any
$k$, set $f_k=g^\prime\beta h^\prime_k$. Then it is easy to check
that $\mu^*(f_k)=d_k\cdot\alpha$ for all $k$. Hence, it follows that
$J\alpha\subseteq \im(\mu^*)$, i.e., $\cok(\mu^*)$ is $\GV$-torsion.
\end{proof}

\section{Kaplansky's theorem for $w$-projective $w$-modules}

To begin with, we fix for all the section the following notation.
Let $$0\rightarrow P\xrightarrow{f} F\xrightarrow{g} M\rightarrow 0
\eqno{(\xi)}$$ be a $w$-split exact sequence of $R$-modules and
$F_1$ a submodule of $F$. Then pick $J\in\GV(R)$ and
$h_1,\dots,h_n\in\Hom_R(M,F)$ as in the definition in Section 2 and
write $$\mbox{$g_1=g|_{F_1}$, $M_1=\im(g_1)$,
$P_1=f^{-1}(\ker(g_1))$ and $f_1=f|_{P_1}$}.$$ Thus, the sequence
$$0\rightarrow P_1\xrightarrow{f_1} F_1\xrightarrow{g_1}
M_1\rightarrow 0 \eqno{(\xi_1)}$$ is also exact. Moreover, if for
each $k=1,\dots,n$, $h_k(M_1)\subseteq F_1$, then $(\xi_1)$ is also
$w$-split. In this case ($h_k(M_1)\subseteq F_1$ for all
$k=1,\dots,n$), we call $(\xi_1)$ a \textit{$w$-split exact sequence
induced by $(\xi)$ with respect to the submodule $F_1$}.

Let $F$ be as in $(\xi)$ with a direct sum decomposition
$F=\bigoplus\limits_{i\in I}F_i$ of projective submodules, where $I$
is an index set. For each subset $H$ of $I$, if $H=\emptyset$, then
write $$\mbox{$F(H)=0$, $P(H)=0$ and $M(H)=0$;}$$ otherwise, write
$$\mbox{$F(H)=\bigoplus\limits_{j\in H}F_j$, $g_H=g|_{F(H)}$,
$M(H)=\im(g_H)$,}$$ $$\mbox{$P(H)=f^{-1}(\ker(g_H))$ and
$f_H=f|_{P(H)}$.}$$ It is obvious that $F(I)=F$, $M(I)=M$ and
$P(I)=P$, and that if $H_1,H_2$ are subsets of $I$ with
$H_1\subseteq H_2$, then $F(H_1)$ is a direct summand of $F(H_2)$,
$P(H_1)\subseteq P(H_2)$, and $M(H_1)\subseteq M(H_2)$. Now, $M(H)$
is said to be a \textit{$w$-split module induced by $(\xi)$ with
respect to the subset $H$} if the sequence $$0\rightarrow
P(H)\xrightarrow{f_H} F(H)\xrightarrow{g_H} M(H)\rightarrow 0
\eqno{(\xi_H)}$$ is a $w$-split exact sequence induced by $(\xi)$
with respect to the projective submodule $F(H)$.

\begin{lemma}\label{cup of w-split modules}
Let $F$ be the module as in the previous paragraph. Suppose that
$$\mathcal{S}_1=\{H_s~|~H_s\subseteq I\}$$ is a set, totally ordered
by inclusion, satisfying the sequence $$0\rightarrow
P(H_s)\xrightarrow{f_{H_s}} F(H_s)\xrightarrow{g_{H_s}}
M(H_s)\rightarrow 0\eqno{(\xi_{H_s})}$$ is a $w$-split exact
sequence induced by $(\xi)$ with respect to the submodule $F(H_s)$
for any $H_s\in\mathcal{S}_1$. Set
$H=\mathop\bigcup\limits_{H_s\in\mathcal{S}_1}H_s$. Then the
following statements hold.
\begin{enumerate}
\item $\mathop\bigcup\limits_{H_s\in\mathcal{S}_1}F(H_s)=F(H)$
and hence it is a projective module. Moreover,
$\mathop\bigcup\limits_{H_s\in\mathcal{S}_1}P(H_s)=P(H)$ and
$\mathop\bigcup\limits_{H_s\in\mathcal{S}_1}M(H_s)=M(H)$.
\item The sequence
$$0\rightarrow \mathop\bigcup\limits_{H_s\in\mathcal{S}_1}P(H_s)
\xrightarrow{\quad}\mathop\bigcup\limits_{H_s\in\mathcal{S}_1}F(H_s)
\xrightarrow{\quad}\mathop\bigcup\limits_{H_s\in\mathcal{S}_1}M(H_s)
\rightarrow 0
\eqno{\left(\xi_{\mathop\cup_{\mathcal{S}_1}}\right)}$$ is a
$w$-split exact sequence induced by $(\xi)$ with respect to the
submodule $\mathop\bigcup\limits_{H_s\in\mathcal{S}_1}F(H_s)$. Hence
$\mathop\bigcup\limits_{H_s\in\mathcal{S}_1}M(H_s)$ is a $w$-split
module induced by $(\xi)$ with respect to the subset $H$.
\end{enumerate}
\end{lemma}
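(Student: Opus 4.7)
The plan is to deduce (2) from (1), and to prove (1) by a finite-support argument that exploits the fact that $\mathcal{S}_1$ is totally ordered by inclusion.

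For part (1), I would first establish $\bigcup_{H_s \in \mathcal{S}_1} F(H_s) = F(H)$. The inclusion $\subseteq$ is immediate since $H_s \subseteq H$ implies $F(H_s) \subseteq F(H)$. For the reverse inclusion, pick $x \in F(H) = \bigoplus_{j \in H} F_j$; its support is a finite set $\{j_1, \dots, j_m\} \subseteq H = \bigcup_s H_s$, so each $j_\ell$ lies in some $H_{s_\ell}$, and by total ordering a single $H_s$ contains all of them, whence $x \in F(H_s)$. In particular $F(H)$ is a direct sum of projective modules, hence projective. The equality $\bigcup_s M(H_s) = M(H)$ follows by applying the $R$-linear map $g$ to $\bigcup_s F(H_s) = F(H)$. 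For $\bigcup_s P(H_s) = P(H)$: the inclusion $\subseteq$ holds because $H_s \subseteq H$ forces $\ker(g_{H_s}) \subseteq \ker(g_H)$; conversely, if $p \in P(H)$ then $f(p) \in \ker(g_H) = \ker(g) \cap F(H)$, and choosing $H_s$ with $f(p) \in F(H_s)$ gives $f(p) \in \ker(g_{H_s})$, i.e.\ $p \in P(H_s)$.

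For part (2), the exactness of $(\xi_{\mathop\cup_{\mathcal{S}_1}})$ is then immediate from (1): $f_H$ is the restriction of the monomorphism $f$ and hence monic, $g_H$ is surjective onto $M(H)$ by construction, and $\ker(g_H) = f_H(P(H))$ follows directly from the definitions of $P(H)$ and $f_H$. To prove it is a $w$-split sequence induced by $(\xi)$ with respect to $F(H)$, I would reuse the same $J = \langle d_1, \dots, d_n \rangle$ and the same homomorphisms $h_1, \dots, h_n$ fixed for $(\xi)$, and verify the crucial compatibility condition $h_k(M(H)) \subseteq F(H)$ for each $k$. Given $y \in M(H) = \bigcup_s M(H_s)$, some $H_s$ satisfies $y \in M(H_s)$, and by hypothesis on $\mathcal{S}_1$ we get $h_k(y) \in F(H_s) \subseteq F(H)$. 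Since $\eta_{d_k}^{M(H)} = g_H \circ h_k|_{M(H)}$ follows from $\eta_{d_k}^M = g h_k$ by restriction, this exhibits $(\xi_{\mathop\cup_{\mathcal{S}_1}})$ as the desired induced $w$-split sequence.

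The only genuine content is the union-of-direct-sums step in (1), which requires both the finite-support structure of the direct sum and the total ordering of $\mathcal{S}_1$; after that everything reduces to routine verification using the compatibility condition $h_k(M(H_s)) \subseteq F(H_s)$ baked into the definition of \emph{induced $w$-split sequence}. I anticipate no substantive obstacle beyond organising these bookkeeping steps carefully.
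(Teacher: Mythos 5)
Your proposal is correct and follows essentially the same route as the paper's proof: establish $\bigcup_s F(H_s)=F(H)$ via finite support plus the total ordering of $\mathcal{S}_1$, deduce the equalities for $P$ and $M$, and then verify $h_k(M(H))\subseteq F(H)$ by locating each element in some $M(H_{s_0})$ and using the compatibility condition for $(\xi_{H_{s_0}})$. No gaps.
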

\begin{proof}
(1) For each $y\in F(H)$, write $y=\sum\limits_{j\in H}y_j$, where
$y_j\neq 0$ for only a finite number of indices $j$. Since
$\mathcal{S}_1$ is totally ordered, we can choose some
$H_{s_0}\in\mathcal{S}_1$ such that $y=\sum\limits_{i\in
H_{s_0}}y_i\in F(H_{s_0})$, and so $F(H)\subseteq
\bigcup\limits_{H_s\in\mathcal{S}_1}F(H_s)$. The other inclusion is
clear.

It is obvious that
$\bigcup\limits_{H_s\in\mathcal{S}_1}P(H_s)\subseteq P(H)$. For the
other inclusion, let $x\in P(H)$. Then we have
$$f(x)\in\ker(g_H)=\ker(g)\bigcap F(H).$$ Since
$F(H)=\bigcup\limits_{H_s\in\mathcal{S}_1}F(H_s)$, there is some
$H_{s_0}\in\mathcal{S}_1$ such that $f(x)\in F(H_{s_0})$, and so
$f(x)\in\ker(g)\bigcap F(H_{s_0})=\ker(g_{H_{s_0}})$. Thus, $x\in
f^{-1}(\ker(g_{H_{s_0}}))=P(H_{s_0})$, whence $P(H)\subseteq
\mathop\bigcup\limits_{H_s\in\mathcal{S}_1}P(H_s)$.

Let $x\in M(H)$. Then $x=g(y)$ for some $y\in F(H)$. There is some
$H_{s_0}\in\mathcal{S}_1$ such that $y\in F(H_{s_0})$. Therefore,
$x\in g\left(F(H_{s_0})\right)=M(H_{s_0})$, whence $M(H)\subseteq
\mathop\bigcup\limits_{H_s\in\mathcal{S}_1}M(H_s)$. The other
inclusion is obvious.

(2) Pick $h_1,\dots,h_n\in\Hom_R(M,F)$ as in the definition of
$w$-split exact sequences. Then we need only show that
$$h_k\left(\bigcup\limits_{H_s\in\mathcal{S}_1}M(H_s)\right)\subseteq
\bigcup\limits_{H_s\in\mathcal{S}_1}F(H_s)$$ for all $k=1,\dots,n$.
Now let $x\in \bigcup\limits_{H_s\in\mathcal{S}_1}M(H_s)$ be
arbitrary. Then $x\in M(H_{s_0})$ for some $H_{s_0}\in
\mathcal{S}_1$. Since $\left(\xi_{H_{s_{0}}}\right)$ is a $w$-split
exact sequence induced by $(\xi)$ with respect to the submodule
$F(H_{s_0})$, we have $h_k\left(M(H_{s_0})\right)\subseteq
F(H_{s_0})$ for any $k$, and so $h_k(x)\in F(H_{s_0})\subseteq
\bigcup\limits_{H_s\in\mathcal{S}_1}F(H_s)$, as desired.

The second assertion of (2) is clear.
\end{proof}

Recall that an ideal $I$ of $R$ is said to be a \textit{$w$-ideal}
if it is a $w$-module as an $R$-module, i.e., $I=I_w$. Let
$\wmax(R)$ denote the set of $w$-ideals of $R$ maximal among proper
integral $w$-ideals of $R$ and we call $\frak m\in\wmax(R)$ a
maximal $w$-ideal of $R$. Then every proper $w$-ideal is contained
in a maximal $w$-ideal and every maximal $w$-ideal is a prime ideal.

Let $M$ be an $R$-module and $S$ a multiplicatively closed subset of
$R$. Then we denote as usual by $M_S$ the localization of $M$ at
$S$. In particular, if $\frak p$ is a prime ideal of $R$ and
$S=R\backslash\frak p$, then $M_S$ is denoted by $M_\frak p$. Let
$f:M\rightarrow N$ be a homomorphism of $R$-modules. For $s\in S$
and $x\in M$, define $$f_S\left(\frac{x}{s}\right)=\frac{f(x)}{s}.$$
Then $f_S:M_S\rightarrow N_S$ is a well-defined $R_S$-homomorphism.

Recall from \cite{WK16} that an $R$-homomorphism $f:M\rightarrow N$
is called a \textit{$w$-isomorphism} if $f_{\frak m}:M_{\frak
m}\rightarrow N_{\frak m}$ is an isomorphism over $R_\frak m$ for
any $\frak m\in\wmax(R)$. Since an $R$-module $M$ is $\GV$-torsion
if and only if $M_\frak m=0$ for any $\frak m\in\wmax(R)$ (see
\cite[Theorem 6.2.15]{WK16}), it is easy to see that a homomorphism
$f:M\rightarrow N$ is a $w$-isomorphism if and only if both
$\ker(f)$ and $\cok(f)$ are $\GV$-torsion.

Now, we call an $R$-module $M$ a \textit{$w$-countably generated
module} if there is a $w$-isomorphism $f:M_0\rightarrow M$ with
$M_0$ a countably generated $R$-module. It is easily seen that $M$
is $w$-countably generated if and only if there exists a submodule
$N$ of $M$ such that for any $\frak m\in\wmax(R)$, $N_\frak
m=M_\frak m$.

\begin{lemma}\label{key lemma}
Let $F$ be as in $(\xi)$. Suppose that $F$ is a $w$-module over $R$
with a direct sum decomposition $F=\bigoplus\limits_{i\in I}F_i$ of
countably generated submodules. For each subset $H$ of $I$, let
$F(H)$, $P(H)$ and $M(H)$ be as before. If $H$ is a proper subset of
$I$ satisfying $(\xi_H)$ is a $w$-split exact sequence induced by
$(\xi)$ with respect to the submodule $F(H)$, then the following
statements hold.
\begin{enumerate}
\item There is a subset $H_1$ of $I$ properly containing $H$ such that
$$0\rightarrow P(H_1)\xrightarrow{f_{H_1}}F(H_1)\xrightarrow{g_{H_1}}
M(H_1)\rightarrow 0\eqno{(\xi_{H_1})}$$ is a $w$-split exact sequence
induced by $(\xi)$ with respect to the submodule $F(H_1)$.
\item $C:=M(H_1)/M(H)$ is a countably generated module.
\item If $M$ is $\GV$-torsionfree, then $C$ is $w$-isomorphic to
$D:=M(H_1)_w/M(H)_w$. In this case, $D$ is a $w$-countably generated module.
\item If $M$ is $\GV$-torsionfree and if each $F_i$ is projective,
then $M(H)$ and $M(H_1)$ are $w$-split modules induced by $(\xi)$
with respect to the subsets $H$ and $H_1$, respectively,
and $C$ is $w$-split. In this case, $D$ is a $w$-countably generated
and $w$-projective module.
\end{enumerate}
\end{lemma}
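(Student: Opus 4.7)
The plan is to dispose of (1) by a countable iterative enlargement of $H$, and deduce (2)--(4) from the structure so produced.

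For (1), pick $i_0 \in I \setminus H$ and set $H^{(0)} = H \cup \{i_0\}$. Inductively, suppose $H^{(t)}$ is defined with $H \subseteq H^{(t)}$ and $H^{(t)} \setminus H$ countable. Then $M(H^{(t)}) = M(H) + g\bigl(\bigoplus_{j \in H^{(t)}\setminus H} F_j\bigr)$, and since each $F_j$ is countably generated, $M(H^{(t)})/M(H)$ is countably generated. The hypothesis on $(\xi_H)$ gives $h_k(M(H)) \subseteq F(H)$ for each $k$, so the $h_k$-images ($k = 1, \ldots, n$) of countably many coset representatives of $M(H^{(t)}) \bmod M(H)$ lie in $F(I_t)$ for some countable $I_t \subseteq I$. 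Put $H^{(t+1)} = H^{(t)} \cup I_t$ and $H_1 = \bigcup_t H^{(t)}$; then $H \subsetneq H_1$ and $H_1 \setminus H$ is countable. By Lemma \ref{cup of w-split modules}(1), every element of $M(H_1)$ lies in some $M(H^{(t)})$, so $h_k(M(H_1)) \subseteq \bigcup_t F(H^{(t+1)}) \subseteq F(H_1)$, and $(\xi_{H_1})$ is the sought $w$-split exact sequence induced by $(\xi)$.

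For (2) and (3), the epimorphism $g_{H_1}$ induces a surjection $F(H_1)/F(H) \twoheadrightarrow C$, and $F(H_1)/F(H) \cong \bigoplus_{j \in H_1 \setminus H} F_j$ is a countable sum of countably generated modules, so $C$ is countably generated. Next, define $\phi : C \to D$ by $m + M(H) \mapsto m + M(H)_w$; its kernel $(M(H_1) \cap M(H)_w)/M(H)$ embeds into the $\GV$-torsion module $M(H)_w/M(H)$, and its cokernel is a quotient of the $\GV$-torsion module $M(H_1)_w/M(H_1)$. Thus $\phi$ is a $w$-isomorphism, and together with the countable generation of $C$ this shows $D$ is $w$-countably generated.

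For (4), each $F_i$ being projective makes $F(H)$ and $F(H_1)$ projective, so $(\xi_H)$ and $(\xi_{H_1})$ realize $M(H)$ and $M(H_1)$ as $w$-split modules directly from the definition. To see $C$ is $w$-split, pass to the quotient sequence
$$0 \longrightarrow \ker(\bar g) \longrightarrow F(H_1)/F(H) \stackrel{\bar g}{\longrightarrow} C \longrightarrow 0,$$
whose middle term is projective. Because $h_k(M(H)) \subseteq F(H)$, the composition $M(H_1) \stackrel{h_k}{\longrightarrow} F(H_1) \twoheadrightarrow F(H_1)/F(H)$ kills $M(H)$ and descends to $\bar h_k : C \to F(H_1)/F(H)$, and one checks $\bar g \bar h_k = \eta_{d_k}^C$. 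So $C$ is $w$-split, and hence $w$-projective by Corollary \ref{corollary of w-split}(1). Finally, $\phi : C \to D$ being a $w$-isomorphism induces $L(C) \cong L(D)$ (a $w$-isomorphism between $\GV$-torsionfree modules has coinciding $w$-envelopes), so $D$ is $w$-projective as well.

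The main obstacle is the countable bookkeeping in (1), ensuring that each step adds only countably many indices while preserving the $h_k$-closedness that underlies the notion of an induced $w$-split exact sequence. A secondary subtlety in (4) is the passage from $w$-projectivity of $C$ to that of $D$: one must verify that the induced map $C/\tor(C) \to D/\tor(D)$ is a $w$-isomorphism of $\GV$-torsionfree modules, whose $w$-envelopes therefore agree and give $L(C) \cong L(D)$.
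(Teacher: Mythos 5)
Your proof is correct, and its overall architecture --- a countable iterative enlargement of $H$ in (1), followed by reading (2)--(4) off the resulting exact sequences --- matches the paper's; but your execution of (1) is genuinely lighter. The paper first invokes Proposition \ref{left w-split} to obtain retractions $q_k$ with $d_kx=fq_k(x)+h_kg(x)$, closes each countable stage under all of the maps $fq_k$ and $h_kg$, and then uses the hypothesis that $F$ is a $w$-module (together with $J_w=R$) to absorb the factor $J$ and deduce $\bigoplus_{j\in I_s}F_j\subseteq\bigoplus_{i\in I_{s+1}}F_i$. You close only under $h_kg$, which is all the definition of an induced $w$-split sequence actually demands, so the $q_k$ and the $w$-module hypothesis on $F$ never enter your part (1); this is a real simplification. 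One phrase needs tightening: ``the $h_k$-images of countably many coset representatives of $M(H^{(t)})\bmod M(H)$'' should be read as ``the $h_k$-image of the countably generated submodule $g\bigl(\bigoplus_{j\in H^{(t)}\setminus H}F_j\bigr)$'', since the cosets themselves may be uncountably many; with that reading you get $h_k(M(H^{(t)}))\subseteq F(H)+F(I_t)\subseteq F(H^{(t+1)})$, which is exactly what the union argument requires. In (2) and (4) you replace the paper's $3\times3$ diagram and the induced map $\alpha_k:C\rightarrow V$ by the isomorphic quotient $F(H_1)/F(H)\cong V$ and the descended maps $\bar h_k$ --- the same content in cleaner packaging --- and (3) is the paper's Snake Lemma computation done by hand. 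Finally, your last step, passing from $w$-splitness of $C$ to $w$-projectivity of $D$ via a $w$-isomorphism $C/\tor(C)\rightarrow D/\tor(D)$ of $\GV$-torsionfree modules and hence $L(C)\cong L(D)$, supplies a justification that the paper asserts but does not write out, and it is sound.
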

\begin{proof}
(1) Since $(\xi)$ is a $w$-split exact sequence, we can pick
$J=\langle d_1,\dots,d_n\rangle\in\GV(R)$, $h_k:M\rightarrow F$ and
$q_k:F\rightarrow P$ as in Proposition \ref{left w-split}, where
$k=1,\dots,n$. Then for any $j\in I$, both $fq_k(F_j)$ and
$h_kg(F_j)$ are countably generated modules, and for each $x\in
F_j$, we have $d_kx=fq_k(x)+h_kg(x)$. Therefore, $d_kF_j\subseteq
fq_k(F_j)+h_kg(F_j)$ for all $k$.

Choose an $i_0\in I\backslash H$. Then there exists a countable
subset $I_1$ of $I$ such that $$d_kF_{i_0}\subseteq
fq_k(F_{i_0})+h_kg(F_{i_0})\subseteq\bigoplus\limits_{i\in I_1}F_i$$
for all $k$. Note that $\bigoplus\limits_{i\in I_1}F_i$ is countably
generated as each $F_i$ is countably generated and $I_1$ is a
countable set. Thus, we can find another countable subset $I_2$ of
$I$ containing $I_1$ with $$d_k\left(\bigoplus\limits_{j\in
I_1}F_j\right)\subseteq fq_k\left(\bigoplus\limits_{j\in
I_1}F_j\right)+h_kg\left(\bigoplus\limits_{j\in
I_1}F_j\right)\subseteq\bigoplus\limits_{i\in I_2}F_i$$ for all $k$.
Continuing, we obtain countable subsets
$I_0=\{i_0\},I_1,I_2,\dots,I_s,\dots$ satisfying
$$d_k\left(\bigoplus\limits_{j\in I_s}F_j\right)\subseteq
fq_k\left(\bigoplus\limits_{j\in
I_s}F_j\right)+h_kg\left(\bigoplus\limits_{j\in
I_s}F_j\right)\subseteq\bigoplus\limits_{i\in
I_{s+1}}F_i\eqno{(\dag)}$$ for all $k$. Hence,
$$J\left(\bigoplus\limits_{j\in I_{s}}F_j\right)\subseteq
\bigoplus\limits_{i\in I_{s+1}}F_i.$$ But since $J_w=R$ (cf.
\cite[Proposition 3.5]{YWZC11}) and both $\bigoplus\limits_{j\in
I_{s}}F_j$ and $\bigoplus\limits_{i\in I_{s+1}}F_i$ are $w$-modules
(cf. \cite[Proposition 2.3]{YWZC11}), it follows easily from
\cite[Theorem 6.2.2]{WK16} that $\bigoplus\limits_{j\in
I_{s}}F_j\subseteq\bigoplus\limits_{i\in I_{s+1}}F_i$.

Set $L=\bigcup\limits_{s=0}^{\infty}I_s$. Then it is a countable
set, and so $L_1:=L\backslash H$ is countable too. Write
$H_1=H\bigcup L$. Then $H_1=H\bigcup L_1$. Thus
$V:=\bigoplus\limits_{i\in L_1}F_i$ is countably generated and
$F(H_1)=F(H)\bigoplus V$. For each $j\in H_1$, if $j\in H$, then
since $(\xi_H)$ is a $w$-split exact sequence induced by $(\xi)$
with respect to the submodule $F(H)$, we obtain
$$h_kg(F_j)\subseteq h_k(M(H))\subseteq F(H)\subseteq F(H_1).$$
Otherwise, $j\in L_1$, and so $j\in I_s$ for some $s$. Hence, by
$(\dag)$, we also have $h_kg(F_j)\subseteq F(H_1)$. So it follows
that $h_k(M(H_1))\subseteq F(H_1)$, whence (1) holds.

(2) Consider the following commutative diagram with exact rows and columns
$$\xymatrix{
& 0\ar[d] & 0\ar[d] & 0\ar[d] &\\
0\ar[r] & P(H)\ar[r]\ar[d] & P(H_1)\ar[r]\ar[d] & A\ar[r]\ar[d] & 0\\
0\ar[r] & F(H)\ar[r]\ar[d] & F(H_1)\ar[r]\ar[d] & V\ar[r]\ar[d]^{g^\prime} & 0\\
0\ar[r] & M(H)\ar[r]\ar[d] & M(H_1)\ar[r]\ar[d] & C\ar[r]\ar[d] & 0\\
& 0 & 0 & 0 &
}$$ where $V$ is as in the proof of (1).
Then since $V$ is countably generated, so is $C$.

(3) Consider the following commutative diagram having exact rows.
$$\xymatrix{
0\ar[r] & M(H)\ar[r]\ar[d] & M(H_1)\ar[r]\ar[d] & C\ar[r]\ar[d]^h & 0\\
0\ar[r] & M(H)_w\ar[r] & M(H_1)_w\ar[r] & D\ar[r] & 0
}$$ Then the Snake Lemma implies that the sequence
$$0\rightarrow \ker(h)\rightarrow M(H)_w/M(H)\rightarrow M(H_1)_w/M(H_1)
\rightarrow \cok(h)\rightarrow 0$$ is exact. Therefore, both
$\ker(h)$ and $\cok(h)$ are $\GV$-torsion, i.e., $h$ is a
$w$-isomorphism. Hence, by (2), $D$ is $w$-countably generated.

(4) If each $F_i$ is a projective module, then it is clear that
$M(H)$ and $M(H_1)$ are $w$-split modules induced by $(\xi)$ with
respect to the subsets $H$ and $H_1$, respectively. Let $J=\langle
d_1,\dots,d_n\rangle$ and $h_k$ ($k=1,\dots,n$) as in the proof of
(1). Then to show that $C$ is $w$-split, let us consider, for each
$k$, the following diagram having exact rows
$$\xymatrix{
  0 \ar[r] & M(H) \ar[d]_{h^{(k)}} \ar[r] & M(H_1) \ar[d]^{h^{(k)}_1}
  \ar[r] & C \ar@{-->}[d]^{\alpha_k} \ar[r] & 0 \\
  0 \ar[r] & F(H) \ar[r] & F(H_1) \ar[r] & V \ar[r] & 0   }$$ where
  $h^{(k)}=h_k|_{M(H)}$, $h^{(k)}_1=h_k|_{M(H_1)}$, and $V$ is as in
  the proof of (1). Clearly, the left square commutes, and so there
  is a homomorphism $\alpha_k:C\rightarrow V$ such that the right
  square commutes as well. Let $g^\prime$ be as in the proof of (2).
  Then it is not difficult to see that $g^\prime\alpha_k=\eta^C_{d_k}$ for all $k$.
  Thus, $0\rightarrow A\rightarrow V\stackrel{g^\prime}\longrightarrow C\rightarrow
  0$ is a $w$-split exact sequence with $V$ projective, and consequently $C$ is $w$-split.
\end{proof}

Let $\alpha$ be an ordinal and
$\mathcal{A}=(A_\lambda~|~\lambda\leq\alpha)$ a sequence of modules.
Then $\mathcal{A}$ is called a \textit{continuous chain of modules}
(see \cite{GT06}) if $A_0=0$, $A_{\lambda}\subseteq A_{\lambda+1}$
for all $\lambda<\alpha$ and
$A_{\lambda}=\bigcup\limits_{\mu<\lambda}A_{\mu}$ for all limit
ordinals $\lambda\leq\alpha$.

Let $M$ be a $w$-split $R$-module. Then there exists a $w$-split
exact sequence $(\xi)$ with $F$ projective. Write
$F=\bigoplus\limits_{i\in I}F_i$, where each $F_i$ is a countably
generated projective module. Now, a submodule $N$ of $M$ is said to
be \textit{filtered} by countably generated $w$-split modules if,
for some ordinal $\alpha$, there exist a subset $H$ of $I$ and a
sequence $$\mathcal{H}=(H_\lambda~|~\lambda\leq\alpha)$$ of subsets
of $H$ such that $H_0=\emptyset$, $H_\alpha=H$ and
\begin{enumerate}
    \item[(i)] the sequence of modules $$\mathcal{N}=
    \left(N_\lambda:=M(H_\lambda)~|~\lambda\leq\alpha\right)\eqno{(\ddag)}$$
    is a continuous chain of $N$ with $N_\alpha=N$;
    \item[(ii)] each $N_\lambda$ is a $w$-split module induced by
    $(\xi)$ with respect to the subset $H_\lambda$;
    \item[(iii)] for each $\lambda<\alpha$, $N_{\lambda+1}/N_{\lambda}$
    is a countably generated $w$-split module.
\end{enumerate}
The chain $\mathcal{N}$ is called a \textit{countably generated
$w$-split filtration} of $N$.

In this case, if $B$ is also a submodule of $M$ and if, for some
ordinal $\beta\leq \alpha$, $B$ is filtered by countably generated
$w$-split modules having a continuous chain
$$\mathcal{B}=(B_\mu:=M(H_\mu)~|~\mu\leq\beta)$$ of submodules such
that $B_\mu=N_\mu$ for each $\mu\leq\beta$, then we call $N$ a
\textit{filtered extension} of $B$ by countably generated $w$-split
modules.

\begin{lemma}\label{cup of filtered modules}
Let, as in the $(\xi)$ above, $M$ be a $w$-split module and $F$ a
projective module with a direct sum decomposition
$F=\bigoplus\limits_{i\in I}F_i$ of countably generated projective
submodules. Suppose that $\{H_j\}_{j\in \Gamma}$ is a totally
ordered family of subsets of $I$ satisfying that for each
$j\in\Gamma$, $A_j:=M(H_j)$ can be filtered by countably generated
$w$-split modules, and that for $j,k\in \Gamma$, if $H_j\subseteq
H_k$, then $A_k$ is a filtered extension of $A_j$ by countably
generated $w$-split modules. Then $N:=\bigcup\limits_{j\in
\Gamma}A_j$ can also be filtered by countably generated $w$-split
modules and for each $j\in \Gamma$, it is a filtered extension of
$A_j$.
\end{lemma}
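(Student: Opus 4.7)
The plan is to splice together the given filtrations of the $A_j$'s into a single countably generated $w$-split filtration of $N$. As a preliminary, for each $j\in\Gamma$ I fix a filtration $\mathcal{N}^{(j)}=(N^{(j)}_\lambda\mid\lambda\leq\alpha_j)$ of $A_j$ with corresponding subsets $\mathcal{H}^{(j)}=(H^{(j)}_\lambda\mid\lambda\leq\alpha_j)$. The filtered extension hypothesis means precisely that whenever $H_j\subseteq H_k$ one has $\alpha_j\leq\alpha_k$ and $H^{(j)}_\lambda=H^{(k)}_\lambda$ (hence $N^{(j)}_\lambda=N^{(k)}_\lambda$) for all $\lambda\leq\alpha_j$; that is, the filtration of $A_k$ is an honest prolongation of that of $A_j$. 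A standard Zorn / cofinal-chain reduction along the totally ordered index set $\Gamma$ lets me assume a single globally compatible family of filtrations is in hand, with the filtered extension hypothesis supplying at each inductive step exactly the prolongation needed.

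Now set $\alpha=\sup_{j\in\Gamma}\alpha_j$ and $H=\bigcup_{j\in\Gamma}H_j$. For each $\lambda<\alpha$, choose any $j$ with $\lambda\leq\alpha_j$ and put $H_\lambda:=H^{(j)}_\lambda$ and $N_\lambda:=N^{(j)}_\lambda=M(H_\lambda)$; by compatibility these are independent of the choice of $j$. At the top put $H_\alpha=H$ and $N_\alpha=M(H)$. Lemma \ref{cup of w-split modules}(1) applied to the totally ordered family $\{H_j\}_{j\in\Gamma}$ gives $M(H)=\bigcup_j M(H_j)=\bigcup_j A_j=N$, so $N_\alpha=N$ as required.

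It then remains to verify clauses (i)--(iii) of the definition of a countably generated $w$-split filtration for $\mathcal{N}:=(N_\lambda\mid\lambda\leq\alpha)$. For every $\lambda<\alpha$, the ascent $N_\lambda\subseteq N_{\lambda+1}$, the induced $w$-split status of $N_\lambda$ with respect to $H_\lambda$, the countably generated $w$-split quotient $N_{\lambda+1}/N_\lambda$, and continuity at limit $\lambda$ all transfer directly from the corresponding clause in some $\mathcal{N}^{(j)}$ whose index range contains $\lambda$ (or $\lambda+1$). The genuinely new work appears only at $\lambda=\alpha$ when $\alpha$ is a limit ordinal (i.e., when $\Gamma$ has no maximum): continuity there amounts to $N=\bigcup_{\mu<\alpha}N_\mu$, which holds because the modules $A_j=N_{\alpha_j}$ are cofinal among the $N_\mu$, while the induced $w$-split status of $N_\alpha=M(H)$ with respect to $H$ follows from Lemma \ref{cup of w-split modules}(2) applied to the totally ordered (by inclusion) chain $(H_\mu\mid\mu<\alpha)$. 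Finally, for each $j\in\Gamma$ the restriction of $\mathcal{N}$ to $[0,\alpha_j]$ coincides by construction with $\mathcal{N}^{(j)}$, which is precisely the statement that $N$ is a filtered extension of $A_j$. The principal technical obstacle is the preliminary compatibility reduction; the union step at the top is exactly what Lemma \ref{cup of w-split modules} was designed to handle, and the remaining verifications are bookkeeping.
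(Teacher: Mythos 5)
Your proposal is correct and follows essentially the same route as the paper: fix filtrations of the $A_j$, set $\alpha=\sup_{j}\alpha_j$ and $H=\bigcup_{j}H_j$, define $H_\lambda$ for $\lambda<\alpha$ by borrowing from any $\mathcal{H}^{(j)}$ whose index range covers $\lambda$, and invoke Lemma \ref{cup of w-split modules} to identify $M(H)=\bigcup_j A_j=N$ and to get the induced $w$-split structure at the top; the compatibility-of-filtrations point you flag as the principal obstacle is exactly the one the paper itself only addresses in the remark following its proof. The sole organizational difference is that the paper first disposes of the trivial case $N=A_{j}$ for some $j$, after which it can argue that $\alpha$ is a limit ordinal strictly exceeding every $\alpha_j$, whereas you fold that case distinction into the general verification.
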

\begin{proof}
Notice that, if $N=A_j$ for some $j\in\Gamma$, then we have nothing
to prove. So we assume that for any $j\in\Gamma$, $N\neq A_j$. For
all $j\in \Gamma$, since $A_j$ is filtered by countably generated
$w$-split modules, there are an ordinal $\alpha_j$ and $H_j$ a
subset of $I$ such that
$$\left(\left.A^{(j)}_{\lambda_j}:=M\left(H^{(j)}_{\lambda_j}\right)
~\right|~\lambda_j\leq\alpha_j\right)$$ is a countably generated
$w$-split filtration of $A_j$, where
$\left(H^{(j)}_{\lambda_j}~|~\lambda_j\leq\alpha_j\right)$ is a
sequence of subsets of $H_j$ with $H^{(j)}_0=\emptyset$ and
$H^{(j)}_{\alpha_j}=H_j$.

Set $\alpha=\mathop\bigcup\limits_{j\in\Gamma}\alpha_j$. Then we
claim that $\alpha\notin\{\alpha_j\}_{j\in\Gamma}$, and so it is a
limit ordinal. Otherwise, $\alpha=\alpha_j$ for some $j\in\Gamma$.
Since $N\neq A_j$, there exists a $k\in\Gamma$ with $A_k\nsubseteq
A_j$, whence $A_j\subset A_k$. Hence, it follows, from the
definition of filtered extensions above, that
$$\mbox{$\alpha_j\leq\alpha_k(\leq\alpha=\alpha_j)$ and
$A_j=A^{(j)}_{\alpha_j}=A^{(k)}_{\alpha_j}=A^{(k)}_{\alpha_k}=A_k$,}$$
which is a contradiction.

Set $H=\mathop\bigcup\limits_{j\in\Gamma}H_j$. Then we construct a
sequence $$\mathcal{H}=(H_\lambda~|~\lambda\leq\alpha)$$ of subsets
of $H$ as follows.

\begin{enumerate}
\item Write $H_0=\emptyset$ and $H_\alpha=H$.
\item Let $0<\lambda<\alpha$. Then we must have some $j\in\Gamma$ with
$\lambda<\alpha_j$. Define $H_\lambda=H^{(j)}_\lambda$.
\end{enumerate}

Next, we prove that the sequence of submodules
$$\mathcal{N}=\left(N_\lambda:=M(H_\lambda)~|~\lambda\leq\alpha\right)$$
is a countably generated $w$-split filtration of $N$.

\begin{enumerate}
\item[(i)] The sequence $\mathcal{N}$ is a a continuous chain of $N$ with
$N_\alpha=N$.
        \begin{enumerate}
        \item $N_0=M(H_0)=0$ is clear.
        \item $N_\alpha=N$.

        By Lemma \ref{cup of w-split modules}, we have
        $$N_\alpha=M(H_\alpha)=M(H)=\bigcup\limits_{j\in\Gamma}M(H_j)
        =\bigcup\limits_{j\in\Gamma}A_j=N.$$
        \item $N_\lambda\subseteq N_{\lambda +1}$ for all $\lambda<\alpha$.

        If $\lambda<\alpha$, then $\lambda+1<\alpha$ as $\alpha$ is
        a limit ordinal. Consequently, $\lambda<\lambda+1<\alpha_j$
        for some $j\in\Gamma$, and hence $$N_\lambda=M(H_\lambda)
        =M\left(H^{(j)}_{\lambda}\right)\subseteq M\left(H^{(j)}_{\lambda+1}\right)
        =M(H_{\lambda+1})=N_{\lambda+1}.$$
        \item $N_\lambda=\bigcup\limits_{\beta<\lambda}N_\beta$
        for all limit ordinals $\lambda\leq \alpha$.

        Note that we need only consider the case $\lambda=\alpha$.
        For each $j\in\Gamma$, we have some $k$ with $\alpha_j<\alpha_k$,
        and so $A_j\subset A_k$. It follows that
        $$N_{\alpha_j}=M\left(H_{\alpha_j}\right)=M\left(H^{(k)}_{\alpha_j}\right)
        =M\left(H^{(j)}_{\alpha_j}\right)=M\left(H_j\right)=A_j.$$
        This means that $\bigcup\limits_{\beta<\alpha}N_\beta$ contains all $A_j$'s,
        whence $\bigcup\limits_{\beta<\alpha}N_\beta=N=N_\alpha$.
        \end{enumerate}
\item[(ii)] Each $N_\lambda$ is a $w$-split module induced
by $(\xi)$ with respect to the subset $H_\lambda$.

Clearly, this is true for any $\lambda<\alpha$. For the case
$\lambda=\alpha$, it is proved in Lemma \ref{cup of w-split
modules}(3).
\item[(iii)] For each $\lambda<\alpha$, $N_{\lambda+1}/N_{\lambda}$
is a countably generated $w$-split
module.

The proof is similar to that of (c) of (i).
\end{enumerate}

Thus, we see that $N$ is also filtered by countably generated
$w$-split modules. While the second assertion follows immediately
from the constructions of $\mathcal{H}$ and $\mathcal{N}$.
\end{proof}

\begin{remark} For the construction of $\mathcal{H}$ in the proof of
Lemma \ref{cup of filtered modules}, if $0<\lambda<\alpha$ and if
there are $j_1,j_2\in\Gamma$ such that $\lambda<\alpha_{j_1}$ and
$\lambda<\alpha_{j_2}$, then either $H_{j_1}\subseteq H_{j_2}$ or
$H_{j_2}\subseteq H_{j_1}$ as $\{H_j\}_{j\in \Gamma}$ is totally
ordered, and so either $A_{j_1}\subseteq A_{j_2}$ or
$A_{j_2}\subseteq A_{j_1}$. However, in both cases, from the
definition of filtered extensions, it is easy to see that
$$M\left(H^{(j_1)}_{\lambda}\right)=M\left(H^{(j_2)}_{\lambda}\right).$$
Thus, $H_\lambda$ can be defined to be any of $H^{(j_1)}_{\lambda}$
and $H^{(j_2)}_{\lambda}$.
\end{remark}

Similarly, if $M$ is a $w$-projective $w$-module over $R$ and if for
some ordinal $\alpha$, there is a continuous chain
$$\mathcal{M}=(M^\prime_\lambda~|~\lambda\leq\alpha)$$ of
$w$-projective $w$-submodules such that $M^\prime_\alpha=M$ and
$M^\prime_{\lambda+1}/M^\prime_{\lambda}$ is a $w$-countably
generated $w$-projective module for each $\lambda<\alpha$, then $M$
is said to be \textit{filtered} by $w$-countably generated
$w$-projective modules.

Now, we can prove the Kaplansky's theorem for $w$-projective
$w$-modules.

\begin{theorem}\label{main result} Let $M$ be a $w$-projective
$w$-module. Then
\begin{enumerate}
    \item $M$ can be filtered by countably generated $w$-split modules.
    \item $M$ can be filtered by $w$-countably generated $w$-projective modules.
\end{enumerate}
\end{theorem}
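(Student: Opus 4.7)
The plan is to combine Zorn's Lemma with Lemmas \ref{cup of filtered modules} and \ref{key lemma} to construct a countably generated $w$-split filtration of $M$, and then to pass to $w$-envelopes level by level in order to upgrade it into the filtration required in~(2).

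For (1), since $M$ is a $w$-projective $w$-module, Proposition \ref{w-projective w-module is w-split} provides a $w$-split exact sequence $(\xi):0\to P\to F\xrightarrow{g}M\to 0$ with $F$ projective, and the classical Kaplansky theorem decomposes $F=\bigoplus_{i\in I}F_i$ with each $F_i$ countably generated and projective. I then consider the poset $\mathcal{S}$ of pairs $(H,\mathcal{N})$, where $H\subseteq I$ and $\mathcal{N}$ is a countably generated $w$-split filtration of $M(H)$ induced by $(\xi)$ through a chain of subsets of $H$, ordered by the ``filtered extension'' relation. Lemma \ref{cup of filtered modules} is precisely the statement that every totally ordered chain in $\mathcal{S}$ admits an upper bound, so Zorn's Lemma produces a maximal element $(H^*,\mathcal{N}^*)$. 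If one had $H^*\subsetneq I$, Lemma \ref{key lemma}(1) would furnish $H^{**}\supsetneq H^*$ with $(\xi_{H^{**}})$ again $w$-split induced by $(\xi)$; since $M$ is $\GV$-torsionfree (being a $w$-module) and each $F_i$ is projective, parts~(2) and~(4) of the same lemma say that $M(H^{**})/M(H^{*})$ is countably generated and $w$-split, so appending $M(H^{**})$ to $\mathcal{N}^*$ would contradict maximality. Hence $H^*=I$ and $M=M(I)$ carries the desired filtration.

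For (2), I take the filtration $(N_\lambda=M(H_\lambda))_{\lambda\le\alpha}$ produced in (1) and set $M'_\lambda:=(N_\lambda)_w\subseteq M_w=M$. Each $N_\lambda$ is $\GV$-torsionfree, so $L(N_\lambda)=(N_\lambda)_w=L(M'_\lambda)$; combined with Corollary \ref{corollary of w-split}(1), this makes every $M'_\lambda$ a $w$-projective $w$-module. The quotient $M'_{\lambda+1}/M'_\lambda$ is precisely the module $D$ appearing in Lemma \ref{key lemma}(3)--(4), hence $w$-countably generated and $w$-projective; and clearly $M'_0=0$ and $M'_\alpha=M_w=M$.

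The main obstacle I anticipate is continuity of the chain at a limit ordinal $\lambda$, namely the equality $(N_\lambda)_w=\bigcup_{\mu<\lambda}(N_\mu)_w$. One inclusion is immediate. For the other, given $x\in(N_\lambda)_w\subseteq M$, I pick $J=\langle d_1,\dots,d_n\rangle\in\GV(R)$ with $Jx\subseteq N_\lambda=\bigcup_{\mu<\lambda}N_\mu$; because $J$ is finitely generated, only the finitely many elements $d_1 x,\dots,d_n x$ need to be accommodated, and by continuity of $(N_\lambda)$ a single $N_\mu$ with $\mu<\lambda$ already contains them all, forcing $x\in(N_\mu)_w$. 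The finiteness of $\GV$-ideals is really the only ingredient beyond the structural lemmas already established.
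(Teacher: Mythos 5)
Your proposal is correct and follows essentially the same route as the paper: Zorn's Lemma applied to subsets $H\subseteq I$ ordered by the filtered-extension relation, with Lemma \ref{cup of filtered modules} supplying upper bounds and Lemma \ref{key lemma} contradicting maximality for part (1), followed by passing to $w$-envelopes $(N_\lambda)_w$ for part (2). Your explicit verification of continuity at limit ordinals, using that $\GV$-ideals are finitely generated, is a welcome addition that the paper leaves implicit.
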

\begin{proof} (1) Since $M$ is a $w$-projective $w$-module, it is
$w$-split by Proposition \ref{w-projective w-module is w-split}. Let
as before $(\xi)$ be the $w$-split exact sequence with
$F=\bigoplus\limits_{i\in I}F_i$ a projective module, where each
$F_i$ is a countably generated projective module. With the same
notation as in Lemma \ref{cup of w-split modules}, let $\mathcal{S}$
be a collection of subsets $H$ of $I$ satisfying:
\begin{enumerate}
    \item[(a)] $M(H)$ is a $w$-split module induced by $(\xi)$
    with respect to the subset $H$;
    \item[(b)] $M(H)$ can be filtered by countably generated
    $w$-split modules.
\end{enumerate}
Clearly, $\mathcal{S}$ is non-empty as it contains $\emptyset$.
Define a partial order $\leqslant$ on $\mathcal{S}$ by $H_1\leqslant
H_2\Leftrightarrow H_1\subseteq H_2$ and $M(H_2)$ is a filtered
extension of $M(H_1)$ by countably generated $w$-split modules. Let
$\mathcal{S}_1$$=\{H_s\}$ be a totally ordered subset of
$\mathcal{S}$ and $H=\bigcup\limits_{H_s\in\mathcal{S}_1}H_s$. Then
Lemma \ref{cup of w-split modules} says that
$M(H)=\bigcup\limits_{H_s\in\mathcal{S}_1}M(H_s)$ is a $w$-split
module induced by $(\xi)$ with respect to the subset $H$.

Now, we claim that the second condition of Lemma \ref{cup of
filtered modules} is also satisfied. Indeed, as $\mathcal{S}_1$ is
totally ordered by $\leqslant$, it is also totally ordered by
inclusion. Moreover, by the choice of $\mathcal{S}$, for each
$H_s\in\mathcal{S}_1\subseteq \mathcal{S}$, $M(H_s)$ can be filtered
by countably generated $w$-split modules. Also, if
$H_{s_1},H_{s_2}\in\mathcal{S}_1$ with $H_{s_1}\subseteq H_{s_2}$,
then either $H_{s_1}\leqslant H_{s_2}$ or $H_{s_2}\leqslant
H_{s_1}$. In the first case, the definition of $\leqslant$ implies
that $M(H_{s_2})$ is a filtered extension of $M(H_{s_1})$ by
countably generated $w$-split modules. In the second case,
$H_{s_2}\leqslant H_{s_1}$, we have $H_{s_2}\subseteq H_{s_1}$,
whence $H_{s_1}=H_{s_2}$ and $M(H_{s_1})=M(H_{s_2})$. But it is
obvious that $M(H_{s_2})$ is a filtered extension of itself
($M(H_{s_1})$) by countably generated $w$-split modules.

Thus, it follows from Lemma \ref{cup of filtered modules} that
$M(H)$ can be filtered by countably generated $w$-split modules and
that for each $H_s\in \mathcal{S}_1$, $M(H)$ is a filtered extension
of $M(H_s)$ by countably generated $w$-split modules. Thus, $H\in
\mathcal{S}$ and it is a upper bound of $\mathcal{S}_1$. By Zorn's
Lemma, $\mathcal{S}$ has a maximal element, say, $H$.

If $H\neq I$, then by Lemma \ref{key lemma}, there is a subset $H_1$
of $I$ properly containing $H$ such that $M(H_1)$ is a $w$-split
module induced by $(\xi)$ with respect to the subset $H_1$  and that
$C=M(H_1)/M(H)$ is a countably generated $w$-split module.
Therefore, if $\left(M_\lambda(H)~|~\lambda\leq\alpha\right)$ is a
countably generated $w$-split filtration of $M(H)$, then by setting
$M_{\alpha+1}(H_1)=M(H_1)$ and $M_\lambda(H_1)=M_\lambda(H)$ for
$\lambda\leq\alpha$, we see that
$\left(M_\lambda(H_1)~|~\lambda\leq\alpha+1\right)$ is a countably
generated $w$-split filtration of $M(H_1)$. Hence, $H_1\in
\mathcal{S}$ and $H\leqslant H_1$, which contradicts the maximality
of $H$. Therefore, $H=I$ and $M(H)=M$, whence $M$ can be filtered by
countably generated $w$-split modules.

(2) By (1), $M$ can be filtered by countably generated $w$-split
modules with a continuous chain $(M_\lambda~|~\lambda\leq\alpha)$.
For each ordinal $\lambda$, set
$M^\prime_{\lambda}=(M_{\lambda})_w$. Then by Corollary
\ref{corollary of w-split}(1), $M_\lambda$ is $w$-projective, and
hence $M^\prime_\lambda$ is a $w$-projective $w$-submodule of $M$.
By the similar proof of Lemma \ref{key lemma}(3), we have that
$M_{\lambda+1}/M_{\lambda}$ is $w$-isomorphic to
$M^\prime_{\lambda+1}/M^\prime_{\lambda}$, and so
$M^\prime_{\lambda+1}/M^\prime_{\lambda}$ is $w$-countably generated
$w$-projective module. Thus, it follows that $M$ can be filtered by
$w$-countably generated $w$-projective modules.
\end{proof}

\section*{Acknowledgments}
\noindent The authors would like to thank the referee for the
helpful comments and suggestions which substantially improved the
paper. This work was partially supported by NSFC (Nos. 11671283 and
11701398) and the Scientific Research Fund of Sichuan Provincial
Education Department (No. 17ZB0362).

\end{document}